\theoremstyle{plain}
\newtheorem{theorem}{Theorem}[section]
\theoremstyle{remark}
\newtheorem{remark}[theorem]{Remark}
\theoremstyle{plain}
\newtheorem{lemma}[theorem]{Lemma}
\newtheorem{definition}[theorem]{Definition}
\newtheorem{assumption}[theorem]{Assumption}
\def\avint_#1{\mathchoice%
	{\mathop{\kern 0.2em\vrule width 0.6em height 0.69678ex depth -0.58065ex
			\kern -0.8em \intop}\nolimits_{\kern -0.4em#1}}%
	{\mathop{\kern 0.1em\vrule width 0.5em height 0.69678ex depth -0.60387ex
			\kern -0.6em \intop}\nolimits_{#1}}%
	{\mathop{\kern 0.1em\vrule width 0.5em height 0.69678ex depth -0.60387ex
			\kern -0.6em \intop}\nolimits_{#1}}%
	{\mathop{\kern 0.1em\vrule width 0.5em height 0.69678ex depth -0.60387ex
			\kern -0.6em \intop}\nolimits_{#1}}}
\newcommand{\N}{\ensuremath{\mathbb{N}}}
\newcommand{\Z}{\ensuremath{\mathbb{Z}}}
\newcommand{\R}{\ensuremath{\mathbb{R}}}
\newcommand{\bR}{\ensuremath{\mathbb{R}}}
\newcommand{\C}{\ensuremath{\mathbb{C}}}
\newcommand{\B}{\ensuremath{\mathcal{B}}}
\newcommand{\tr}{\text{tr}}
\def\cX{\mathcal{X}}
\newcommand{\osc}{{\rm osc}}
\DeclareMathOperator{\esssup}{ess.\,sup}
\def\Xint#1{\mathchoice
	{\XXint\displaystyle\textstyle{#1}}%
	{\XXint\textstyle\scriptstyle{#1}}%
	{\XXint\scriptstyle\scriptscriptstyle{#1}}%
	{\XXint\scriptscriptstyle%
		\scriptscriptstyle{#1}}%
	\!\int}
\def\XXint#1#2#3{{\setbox0=\hbox{$#1{#2#3}{%
				\int}$ }
		\vcenter{\hbox{$#2#3$ }}\kern-.6\wd0}}
\def\dashint{\Xint-}
\numberwithin{equation}{section}
\begin{document}
\numberwithin{equation}{section}
	
\author{Hongjie Dong}
\address{Division of Applied Mathematics\\
Brown University\\ Providence RI 02912, USA} \email{hongjie\_dong@bown.edu}
	
\author{Chiara Gallarati}
\address{Delft Institute of Applied Mathematics\\
Delft University of Technology \\ P.O. Box 5031\\ 2600 GA Delft\\The
Netherlands} \email{C.Gallarati@tudelft.nl}
	
\date\today
	
\title[Higher-order parabolic equations]
{Higher-order parabolic equations with VMO assumptions and general boundary conditions with variable leading coefficients}
	
\begin{abstract}
We prove weighted mixed $L_{p}(L_{q})$-estimates, with $p,q\in(1,\infty)$, for higher-order elliptic and parabolic equations on the half space $\R^{d+1}_{+}$ and on domains with general boundary conditions which satisfy the Lopatinskii--Shapiro condition.
We assume that the elliptic operators $A$ have leading coefficients which are in the class of vanishing mean oscillations both in the time and the space variables, and that the boundary conditions have variable leading coefficients. The proofs are based on and generalize the estimates recently obtained by the authors in \cite{DG17}.

\end{abstract}
	
\keywords{elliptic and parabolic equations, the Lopatinskii--Shapiro condition, inhomogeneous boundary conditions, mixed-norms, Muckenhoupt weights}
\thanks{H. Dong was partially supported by the NSF under agreements DMS-1056737 and DMS-1600593. \\
\indent C. Gallarati was supported by the Vrije Competitie subsidy 613.001.206 of the Netherlands Organisation for Scientific Research (NWO)}

\maketitle

\section{Introduction}
	
In this paper we study the higher-order parabolic equation
\begin{equation}\label{prob:intro}
\begin{cases}
u_t +(\lambda+A)u=f & {\rm on}\quad \R\times\R^{d}_{+}\\
{\tr}_{\R^{d-1}}B_{j}u=g_j & {\rm on}\quad \R\times\R^{d-1}, j=1,\ldots,m,
\end{cases}
\end{equation}
where ``tr'' denotes the trace operator, $A$ is an elliptic differential operator of order $2m$, and $(B_j)$ is a family of differential operators of order $m_j<2m$ for $j=1,\ldots,m$. The leading coefficients of $A$ are assumed to be in the class of vanishing mean oscillations (VMO) both in the time and space variables, while the operators $B_j$ are assumed to have variable leading coefficients. In addition, we assume that near the boundary $(A,B_j)$ satisfies the Lopatinskii--Shapiro condition. Roughly speaking, it is an algebraic condition involving the symbols of the principal part of the operators $A$ and $B_j$ with fixed coefficients, which is equivalent to the solvability of certain systems of ordinary differential equations. See e.g. \cite{Lop53,Sha53,ADN64,WlokaBook}.

Below in Theorem \ref{thm:VMOproblemLStx}, we establish weighted $L_{p}(L_{q})$-estimates with $p,q\in(1,\infty)$ for \eqref{prob:intro} with time-dependent weights in the Muckenhoupt class. This generalizes the recent result obtained by the authors in \cite[Theorem 3.4]{DG17}, where $B_j$ is assumed to have constant leading coefficients.

In contrast to the case when $A$ has uniformly continuous leading coefficients, the extension of the results in \cite{DG17} to boundary operators with variable leading coefficients is nontrivial and does not follow from the standard perturbation argument.
In fact, under the VMO assumption on the coefficients of $A$, in the case when the boundary operators have variable leading coefficients, to apply the method of freezing the coefficients as in \cite[Lemma 4.6]{DG17} one would need to show the mean oscillation estimates of \cite[Lemma 4.5]{DG17} for a equation with inhomogeneous boundary conditions. To the best of the authors knowledge, this case is not covered by the known theory. Moreover, the well-known localization procedure (see for instance \cite[Section 8]{DHP}) does not seem to directly apply to the case $p\neq q$, since we would need a partition of unity argument in both $t$ and $x$.

Our proof is based on a preliminary result for the case $p=q$, Lemma \ref{lemma:estconstcoeff}, in which the $L_q(L_q)$-estimate is shown as a combination of a recent result of Lindemulder in \cite{Lin17} and the estimates in \cite{DG17}, as well as the available extrapolation theory (see \cite[Theorem 1.4]{CMP} and \cite[Theorem 2.5]{DK16}) to extrapolate to $p\neq q$. We also use in a crucial way a version of the Fefferman--Stein sharp function theorem with $A_p$ weights in spaces of homogeneous type, which was recently established in \cite{DK16}.

Research on $L_{p}(L_{q})$-regularity for higher-order equations as \eqref{prob:intro}  has been developed in the last decades by mainly two different approaches. On the one hand, a PDE approach has been developed by a series of papers by Krylov, Dong, and Kim. See e.g. \cite{Krypq}, \cite{DK09, DK11} and references therein. In \cite{DK11} a new technique was developed to produce mean oscillation estimates for equations in the whole and half spaces with the Dirichlet boundary condition, for $p=q$. These results had been extended by the same authors in \cite{DK16} to mixed $L_p(L_q)$-spaces with Muckenhoupt weights and small BMO assumptions on the space variable, for any $p,q\in(1,\infty)$. It is worth noting that in all these references as well as others papers in the literature, VMO coefficients were only considered for equations with specific boundary conditions (Dirichlet, Neumann, or conormal, etc.).

On the other hand, $L_{p}(L_{q})$-regularity can be viewed in a functional analytic approach as an application of a more general abstract result, namely that of maximal $L_{p}$-regularity.
Maximal $L_p$-regularity means that, under certain assumption on $g_j$, for all $f\in L_p(\R,L_q(\R^d_+))$, the solution to the evolution problem \eqref{prob:intro} has the  ``maximal'' regularity in the sense that $u_t$ and $Au$ are both in $L_p(\R,L_q(\R^d_+))$. We refer to \cite{Weis01,PS01, DHP}, \cite{HH, HHH}, \cite{GV,GVsystem} for further informations on autonomous and non-autonomous problems and applications to higher order equations.

In \cite{DHP}, Denk, Hieber, and Pr\"{u}ss obtained $L_p(L_q)$-regularity for every $p,q\in (1,\infty)$ for autonomous, operator-valued parabolic problems on the half space and on domains with homogeneous boundary conditions of the Lopatinskii--Shapiro type. The leading coefficients of the operators involved are assumed to be bounded and uniformly continuous, and their proofs combine operator sum methods with tools from vector-valued harmonic analysis. These results were generalized in \cite{DHP07} by the same authors to $L_{p}(L_{q})$-regularity for non-autonomous, operator-valued parabolic initial-boundary value problems with inhomogeneous boundary data, under the assumption that $t\rightarrow A(t)$ is continuous. See also Weidemaier \cite{Weide02} for the special case where $m=1$, the coefficients are complex-valued coefficients and $q\le p$.
Later, Meyries and Schnaubelt in \cite{MS12b} further generalized the results of \cite{DHP07} to the weighted time-dependent setting, where the weights considered are Muckenhoupt power-type weights. See also \cite{MeyThesis}. Very recently, Lindemulder in \cite{Lin17} generalized the results of \cite{MS12b} to the setting of power weights both in the time and the space variables. In all these results, the leading coefficients of the operators are assumed to be bounded and uniformly continuous in both the time and  space variables.

In this paper, we relax the assumptions on the coefficients of the operators involved to be VMO in the time and space variables, and with inhomogeneous general boundary operators having variable leading coefficients and satisfying the Lopatinskii--Shapiro condition. The main result of this paper is stated in Theorem \ref{thm:VMOproblemLStx}, and in the elliptic setting in Theorem \ref{thm:VMOellipticLStx}. As an application, we obtain in Theorem \ref{thm:VMOdomainLStx} the $L_p(L_q)$-estimates on bounded smooth domains, and we state the elliptic counterpart in Theorem \ref{thm:VMOellipticdomainLStx}. The results here presented generalize the ones in \cite{DG17}, in which the boundary operators were assumed to have constant leading coefficients and only the half space setting was considered.

The paper is organized as follows. In Section \ref{sec:preliminaries} we give the necessary preliminary results and introduce the notation. In Section \ref{sec:assumptions} we list the main assumptions on the operators and state the main theorem. In Section \ref{sec:auxres} we prove an auxiliary lemma needed for the proof of the main theorem, which is given in Section \ref{sec:proofmainresult}. Finally, in Section \ref{sec:domain} we prove $L_p(L_q)$-estimates on domains by using the estimates in the previous sections.
	
\section{Preliminaries}\label{sec:preliminaries}
In this section, we state some necessary preliminary results and introduce the notation used throughout the paper.

\subsection{$A_p$-weights}\label{subsectionDG:weight}

Details on Muckenhoupt weights can be found in \cite[Chapter 9]{GrafakosModern} and \cite[Chapter V]{SteinHA}.

A {\em weight} is a locally integrable function on $\R^d$ with $\omega(x)\in (0,\infty)$ for almost every $x \in \R^d$. The space $L_p(\R^d,\omega)$ is defined as all measurable functions $f$ with
\begin{equation*}
\|f\|_{L_p(\R^d,\omega)}=\Big(\int_{\R^d} |f|^p\ \omega \, d\mu\Big)^\frac{1}{p}<\infty \quad  \text{if $p\in [1, \infty)$},
\end{equation*}
and $\displaystyle \|f\|_{L_\infty(\R^d,\omega)} = \esssup_{x\in \R^d} |f(x)|$.

We recall the class of Muckenhoupt weights $A_{p}$ for $p \in (1,\infty)$. A weight $\omega$ is said to be an {\em $A_{p}$-weight} if
\begin{align*}
[\omega]_p=[\omega]_{A_{p}}:=\sup_{B} \Big(\dashint_B \omega(x) \, dx\Big) \Big(\dashint_B \omega(x)^{-\frac{1}{p-1}}\, dx \Big)^{p-1}<\infty.
\end{align*}
Here the supremum is taken over all balls $B\subset \R^d$ and $\avint_{B} = \frac{1}{|B|}\int_{B}$. The extended real number $[\omega]_{A_{p}}$ is called the {\em $A_p$-constant}. In the case of the half space $\R^{d}_{+}$, we replace the balls $B$ in the definition by $B\cap \R^{d}_{+}=:B^{+}$ with center in $\overline{\R^{d}_{+}}$.

The following properties will be used. For the given $\omega\in A_{p}(\R)$, an open interval $I\in \bR$ and a measurable set $E\subset I$, it holds that
\begin{equation}
                            \label{eq11.07}
\omega(E)/\omega(I)\ge C\big(|E|/|I|\big)^p,
\end{equation}
where $C>0$ is a constant depending only on $p$ and $[\omega]_{A_p}$, and $|E|$ is the Lebesgue measure of $E$.
Moreover, using a reverse H\"{o}lder's inequality (see \cite[Corollary 9.2.4 and Remark 9.2.3]{GrafakosModern}), there is a positive number $\sigma_1=\sigma_1(p,[\omega]_{p})$ such that $p-\sigma_1>1$ and
\[
\omega\in A_{p-\sigma_1}(\R).
\]
Consequently, instead of \eqref{eq11.07}, we have
\begin{equation}
                            \label{eq11.08}
\omega(E)/\omega(I)\ge C\big(|E|/|I|\big)^{p-\sigma_1},
\end{equation}

The celebrated result of Rubio de Francia (see \cite{Rubio82, Rubio83, Rubio84}, \cite[Chapter IV]{GarciaRubio}) allows one to extrapolate from weighted $L_p$-estimates for a single $p$ to weighted $L_q$-estimates for all $q$. The proofs and statement have been considerably simplified and clarified in \cite[Theorem 3.9]{CMP}. The following version of the extrapolation theorem \cite[Theorem 3.9]{CMP} will be needed. Its main feature is that, to prove \eqref{eq:extr2} for a given $\omega\in A_p, p\in(1,\infty)$, the estimate \eqref{eq:extr1} as an assumption needs to hold only for a subset of $A_{p_0}$, not for all weights in $A_{p_0}$. We refer to \cite[Theorem 2.5]{DK16} for further details.

\begin{theorem}\label{thm:extensionRubio}
Let $f,g:\R^d\rightarrow\R$ be a pair of measurable functions, $p_0,p\in(1,\infty)$, and $\omega\in A_p$. Then there exists a constant $\Lambda_0=\Lambda_0(p_0,p,[\omega]_{A_p})\geq 1$ such that if
\begin{equation}\label{eq:extr1}
\|f\|_{L_{p_0}(\tilde{\omega})}\leq C\|g\|_{L_{p_0}(\tilde{\omega})}
\end{equation}
for every $\tilde{\omega}\in A_{p_0}$ satisfying $[\tilde{\omega}]_{A_{p_0}}\leq \Lambda_0$, then we have
\begin{equation}\label{eq:extr2}
\|f\|_{L_{p}(\omega)}\leq 4C\|g\|_{L_{p}(\omega)}.
\end{equation}
\end{theorem}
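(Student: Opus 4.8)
The statement is the quantitative (small-constant) form of Rubio de Francia's extrapolation theorem, and the plan is to prove it along the classical route, through the \emph{Rubio de Francia iteration algorithm}, keeping track of all the constants. I would first recall that engine: given $s\in(1,\infty)$ and a weight $v$ for which the Hardy--Littlewood maximal operator $M$ is bounded on $L_{s}(v)$, with norm denoted $\|M\|_{s,v}$, one sets for $0\le h\in L_{s}(v)$
\[
\mathcal{R}h:=\sum_{k=0}^{\infty}\frac{M^{k}h}{\bigl(2\,\|M\|_{s,v}\bigr)^{k}},\qquad M^{0}h:=h .
\]
Then $h\le\mathcal{R}h$ pointwise, $\|\mathcal{R}h\|_{L_{s}(v)}\le 2\,\|h\|_{L_{s}(v)}$, and $M(\mathcal{R}h)\le 2\,\|M\|_{s,v}\,\mathcal{R}h$, so in particular $\mathcal{R}h\in A_{1}$ with $[\mathcal{R}h]_{A_{1}}\le 2\,\|M\|_{s,v}$, and hence $[\mathcal{R}h]_{A_{p_{0}}}\le[\mathcal{R}h]_{A_{1}}\le 2\,\|M\|_{s,v}$. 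The point that makes the refined statement possible is that $\|M\|_{s,v}$, by Buckley's quantitative estimate, is bounded by a constant depending only on $s$, the dimension, and $[v]_{A_{s}}$ (and only on $s$ and the dimension when $v\equiv 1$); so this $A_{p_{0}}$-constant does not see $h$.

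Next I would split into the cases $p>p_{0}$ and $p<p_{0}$ (the case $p=p_{0}$ being similar, and immediate once $[\omega]_{A_{p_{0}}}\le\Lambda_{0}$). We may assume $\|g\|_{L_{p}(\omega)}<\infty$, since otherwise there is nothing to prove; and, replacing $f$ by $f\,\one_{\{|f|\le N\}\cap B_{N}}$ --- which still satisfies \eqref{eq:extr1} with the \emph{same} constant $C$, the hypothesis being monotone in $|f|$ --- and letting $N\to\infty$ by monotone convergence at the end, we may assume $\|f\|_{L_{p}(\omega)}<\infty$. In the case $p>p_{0}$, by duality in $L_{p/p_{0}}(\omega\,dx)$ one writes $\|f\|_{L_{p}(\omega)}^{p_{0}}=\int_{\R^{d}}|f|^{p_{0}}\,h\,\omega\,dx$ for some $0\le h$ with $\|h\|_{L_{(p/p_{0})'}(\omega)}=1$. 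One then uses the algorithm --- applied to a function built out of $h$ and the factorization structure of $\omega$, and run at a carefully chosen exponent and base weight on which $M$ is bounded with norm controlled by $[\omega]_{A_{p}}$ --- to replace the integrating weight $h\,\omega$ by a weight $\tilde\omega$ satisfying $h\,\omega\le\tilde\omega$ in the sense needed below and, crucially, $\tilde\omega\in A_{p_{0}}$ with $[\tilde\omega]_{A_{p_{0}}}\le\Lambda_{0}$, where $\Lambda_{0}=\Lambda_{0}(p_{0},p,[\omega]_{A_{p}})$ is simply \emph{defined} to be an upper bound for the $A_{p_{0}}$-constant output by this construction. Applying \eqref{eq:extr1} with $\tilde\omega$ gives
\[
\int_{\R^{d}}|f|^{p_{0}}\,h\,\omega\,dx\ \le\ \int_{\R^{d}}|f|^{p_{0}}\,\tilde\omega\,dx\ \le\ C^{p_{0}}\int_{\R^{d}}|g|^{p_{0}}\,\tilde\omega\,dx ,
\]
and H\"older's inequality with exponents $p/p_{0}$ and $(p/p_{0})'$, together with the $L_{s}(v)$-norm bound from the algorithm, brings the right-hand side back to $(4C)^{p_{0}}\,\|g\|_{L_{p}(\omega)}^{p_{0}}$; taking $p_{0}$-th roots gives \eqref{eq:extr2}, the constant $4$ being the product of the two factors $2$ spent in the algorithm. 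The case $p<p_{0}$ is entirely analogous, with the duality step replaced by H\"older's inequality with exponents $p_{0}/p$ and $(p_{0}/p)'$ and the roles of the direct and the dual estimate exchanged.

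The hard part will be the construction of $\tilde\omega$: one must arrange simultaneously that $\tilde\omega$ dominates $h\,\omega$ where it matters \emph{and} that $\tilde\omega\in A_{p_{0}}$ with an $A_{p_{0}}$-constant depending on nothing beyond $p_{0}$, $p$, $[\omega]_{A_{p}}$ and the dimension. The way to do this is to build $\tilde\omega$ only out of an $A_{1}$-weight produced by the algorithm (whose $A_{1}$-norm is quantitatively controlled as above) and a \emph{fixed} power of the \emph{fixed} weight $\omega$ --- never out of $f$ or $g$ --- and then to read off $[\tilde\omega]_{A_{p_{0}}}$ from the factorization inequality $[u_{1}u_{2}^{1-p_{0}}]_{A_{p_{0}}}\le[u_{1}]_{A_{1}}\,[u_{2}]_{A_{1}}^{p_{0}-1}$, Buckley's bound for $\|M\|_{L_{s}(v)\to L_{s}(v)}$ in terms of $[v]_{A_{s}}$, and the identity $[\omega^{1-p'}]_{A_{p'}}=[\omega]_{A_{p}}^{1/(p-1)}$ (which is relevant when the base weight for the algorithm is a power of $\omega$). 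The remaining, routine subtlety --- legitimacy of the duality and H\"older manipulations before $\|f\|_{L_{p}(\omega)}$ is known to be finite --- is exactly what the truncation of $f$ above takes care of. For the full argument one may follow \cite[Theorem 3.9]{CMP} and \cite[Theorem 2.5]{DK16}.
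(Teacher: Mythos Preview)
The paper does not give its own proof of this theorem: it is stated as a preliminary result, with the remark that it is a version of \cite[Theorem~3.9]{CMP} and a pointer to \cite[Theorem~2.5]{DK16} for further details. Your sketch follows exactly the classical Rubio de Francia iteration argument used in those references, with the quantitative tracking of $A_{p_0}$-constants (via Buckley's bound and factorization) that yields the refined formulation; so your approach is correct and is the same as the one the paper is invoking by citation.
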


\subsection{Function spaces and notation}
In this section we introduce some function spaces and notation to be use throughout the paper.

We denote $D=-i(\partial_i,\ldots,\partial_d)$ and we consider the standard multi-index notation $D^{\alpha}=D_1^{\alpha_1}\cdot\ldots\cdot D_{d}^{\alpha_d}$ and $|\alpha|=\alpha_1+\cdots+\alpha_d$ for a multi-index $\alpha=(\alpha_1,\ldots,\alpha_d)\in\N_0^d$.

Denote
$$
\R^{d}_{+}=\big\{x=(x_1,x')\in\R^d: x_1> 0,\ x'\in\R^{d-1} \big\} \quad \text{and}\quad \R^{d+1}_{+}=\R\times\R^{d}_{+}.
$$
The parabolic distance between $X=(t,x)$ and $Y=(s,y)$ in $\R^{d+1}_{+}$ is defined by $\rho(X,Y)=|x-y|+|t-s|^{\frac{1}{2m}}$.
For a function $f$ on $\mathcal{D}\subset\R^{d+1}_{+}$, we set
\[
(f)_{\mathcal{D}}=\frac{1}{|\mathcal{D}|}\int_{\mathcal{D}}f(t,x)\, dx\,dt=\dashint_{\mathcal{D}}f(t,x)\, dx\,dt.
\]
For $m=1,2,\ldots$ fixed depending on the order of the equations under consideration, we denote by
\begin{equation*}
	Q_r^{+}(t,x)=((t-r^{2m},t)\times B_r(x))\cap \R^{d+1}_+
\end{equation*}
the parabolic cylinders, where
$$
B_r(x)=\big\{y\in\R^d:|x-y|<r\big\}\subset\R^d
$$
denotes the ball of radius $r$ with center $x$.
We use $Q_r^{+}$ to indicate $Q_r^{+}(0,0)$. We also define
$$
B_r^+(x)=B_r(x)\cap \R^{d}_+.
$$
We define the mean oscillation of $f$ on a parabolic cylinder as
\[
\osc(f,Q_r^{+}(t,x)):=\dashint_{Q_r^{+}(t,x)}
\big|f(s,y)-(f)_{Q_r^{+}(t,x)}\big|\,ds\,dy
\]
and we denote for $R\in(0,\infty)$,
\[
(f)^{\sharp}_{R}:=\sup_{(t,x)\in\R^{d+1}}\sup_{r\le R}\osc(f,Q_r^{+}(t,x)).
\]
Next, we introduce the function spaces which will be used in the paper.
For $p\in (1,\infty)$ and $k\in\N_0$, we define the standard Sobolev space as
$$
W^{k}_{p}(\R^{d}_{+})=\big\{u\in L_{p}(\R^{d}_{+}):\ D^{\alpha}u\in L_{p}(\R^{d}_{+})\quad \forall |\alpha|\le k \big\}.
$$
For $p,q\in(1,\infty)$, we denote
\[
L_{p}(\R^{d+1}_{+})=L_{p}(\R;L_{p}(\R^{d}_{+}))
\]
and mixed-norm spaces
\[
L_{p,q}(\R^{d+1}_{+})=L_{p}(\R;L_{q}(\R^{d}_{+})).
\]
For parabolic equations we denote for $k=1,2,\ldots$,
\[W^{1,k}_{p}(\R^{d+1}_{+})=W^{1}_{p}(\R;L_{p}(\R^{d}_{+}))\cap L_{p}(\R;W^{k}_{p}(\R^{d}_{+}))\]
and mixed-norm spaces
\[
W^{1,k}_{p,q}(\R^{d+1}_{+})=W^{1}_{p}(\R;L_{q}(\R^{d}_{+}))\cap L_{p}(\R;W^{k}_{q}(\R^{d}_{+})).
\]

We will use the following weighted Sobolev spaces. For $\omega\in A_{p}(\R)$ we denote
\[
L_{p,q,\omega}(\R^{d+1}_{+})=L_{p}(\R,\omega;L_{q}(\R^{d}_{+}))
\]
and
\[
W^{1,k}_{p,q,\omega}(\R^{d+1}_{+})=W^{1}_{p}(\R,\omega;L_{q}(\R^{d}_{+}))\cap L_{p}(\R,\omega;W^{k}_{q}(\R^{d}_{+})),
\]
where by $f\in L_{p,q,\omega}(\R^{d+1}_{+})$ we mean
\[
\|f\|_{L_{p,q,\omega}(\R^{d+1}_{+})}:=	\bigg(\int_{\R}\bigg(\int_{\R^{d}_{+}}|f(t,x)|^{q}\,dx\bigg)^{p/q}\omega(t)\,dt\bigg)^{1/p}<\infty.
\]
\subsection{Interpolation and trace}
The following function spaces from the interpolation theory will be needed. For more information and proofs we refer the reader to \cite{MeyThesis,Tr92,Tr1}.

For $p\in (1,\infty)$ and $s=[s]+s_{\ast}\in \R_+\backslash \N_0$, where $[s]\in\N_0$, $s_{\ast}\in(0,1)$,  we define the Slobodetskii space $W^{s}_{p}$ by real interpolation as
\[
W^{s}_{p}=(W^{[s]}_{p},W^{[s]+1}_{p})_{s_{\ast},p}.
\]
For $m\in\N$, $s\in (0,1]$, and $\omega\in A_p(\R)$, we consider weighted anisotropic spaces of the form
\[
W^{s,2ms}_{p,\omega}(\R^{d+1}_{+})=W^{s}_{p}(\R,\omega;L_{p}(\R^{d}_{+}))\cap L_{p}(\R,\omega;W^{2ms}_{p}(\R^{d}_{+})).
\]

For $p\in (1,\infty)$, $q\in [1,\infty]$, $r\in\R$, $\omega\in A_p(\R^d)$, and $X$ a Banach space, we introduce the Besov space $\B^{r}_{p,q}(\R^d)$ and the weighted $X$-valued Triebel--Lizorkin space $F^{r}_{p,q}(\R^d,\omega;X)$ as follows.

Let $\Phi(\R^d)$ be the set of all sequences $(\varphi_k)_{k\geq 0}\subset\mathcal{S}(\R^d)$ such that
\[
\widehat{\varphi}_0=\widehat{\varphi},\quad \widehat{\varphi}_1(\xi)=\widehat{\varphi}(\xi/2)-\widehat{\varphi}(\xi),\quad \widehat{\varphi}_k(\xi)=\widehat{\varphi}_{1}(2^{-k+1}\xi),
\]
where $k\geq 2$, $\xi\in\R^d$, and where the Fourier transform $\widehat{\varphi}$ of the generating function $\varphi\in\mathcal{S}(\R^d)$ satisfies $0\le \widehat{\varphi}(\xi)\le 1$ for $\xi\in\R^d$ and
\[
\widehat{\varphi}(\xi)=1\quad {\rm if}\ |\xi|\le 1,\quad  \widehat{\varphi}(\xi)=0\quad {\rm if}\ |\xi|\geq 3/2.
\]

\begin{definition}\label{def:TLspace}
Given $(\varphi_k)_{k\geq 0}\in \Phi(\R^d)$, we define the \textit{Besov space} as
\[
\B^{r}_{p,q}(\R^d)=\big\{ f\in \mathcal{S}'(\R^d):\ \|f\|_{\B^{r}_{p,q}(\R^d)}:=\|(2^{kr}\mathcal{F}^{-1}(\widehat{\varphi}_{k}\hat{f}))_{k\geq 0}\|_{\ell_q(L_{p}(\R^d))}  <\infty  \big\},
\]
and the weighted \textit{$X$-valued Triebel--Lizorkin space} as
\begin{align*}
&F^{r}_{p,q}(\R^d,\omega;X)\\
&=\big\{ f\in \mathcal{S}'(\R^d,X):\ \|f\|_{F^{r}_{p,q}(\R^d,\omega;X)}:=\|(2^{kr}\mathcal{F}^{-1}(\widehat{\varphi}_{k}\hat{f}))_{k\geq 0}\|_{L_{p}(\R^d,\omega;\ell_q(X))} <\infty  \big\}.
\end{align*}
\end{definition}

Observe that $\B^{r}_{p,p}(\R^d)=F^{r}_{p,p}(\R^d)$ by Fubini's Theorem. Moreover, we have the following equivalent definition of Slobodetskii space
\[
W^{s}_{p}(\R^d)=
\begin{cases}
W^{k}_{p}(\R^d), & s=k\in\N\\
\B^{s}_{p,p}(\R^d), & s\in \R_+\backslash \N_0.
\end{cases}
\]
Later on we will consider weighted $X$-valued Triebel-Lizorkin spaces on an interval $(-\infty,T)\subset\R$. We define these spaces by restriction.
\begin{definition}
Let $T\in(-\infty,\infty]$ and let $X$ be a Banach space. For $p\in(1,\infty)$, $q\in[1,\infty)$, $\omega\in A_p(\bR)$ and $r\in\R$ we denote by $F^{r}_{p,q}((-\infty,T),\omega;X)$ the collection of all restrictions of elements of $F^{r}_{p,q}(\R,\omega;X)$ on $(-\infty,T)$. If $f\in F^{r}_{p,q}((-\infty,T),\omega;X)$ then
$$
\|f\|_{F^{r}_{p,q}((-\infty,T),\omega;X)}=\inf\|g\|_{F^{r}_{p,q}(\R,\omega;X)}
$$
where the infimum is taken over all $g\in F^{r}_{p,q}(\R,\omega;X)$ whose restriction on $(-\infty,T)$ coincides with $f$.
\end{definition}		

We will need the following spatial traces and interpolation inequalities. For full details about the proofs, we refer the reader respectively to \cite[Lemma 3.5 and Lemma 3.10]{DHP07} for the unweighted setting and to \cite[Lemma 1.3.11 and Lemma 1.3.13]{MeyThesis} where the weights considered are power-type weights in time. The restriction of power-type weights only play a role at $t=0$ in order to have a well-defined trace space. Thus in the formulation below with $t\in\R$, the power-type weight can be replaced by any weight $\omega\in A_p(\R)$; see for instance \cite[Section 6.3]{Lin17} for details.

\begin{lemma}\label{thm:MeyLemma1.3.11}
Let $p\in(1,\infty)$, $\omega\in A_p(\R)$, $m\in\N$, and $s\in (0,1]$ so that $2ms\in\N$. Then the map
\[
{\rm tr}_{x_1=0}:W^{s,2ms}_{p,\omega}(\R^{d+1}_{+})\hookrightarrow W^{s-\frac{1}{2mp},2ms-\frac{1}{p}}_{p,\omega}(\R\times\R^{d-1})
\]
is continuous.
\end{lemma}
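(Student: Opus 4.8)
The plan is to reduce the statement to the known unweighted and power-weighted cases by a combination of Fourier-analytic characterization of the anisotropic space and an abstract trace theorem for vector-valued Triebel--Lizorkin spaces, the $A_p$-weight entering only through the time variable. First I would rewrite $W^{s,2ms}_{p,\omega}(\R^{d+1}_+) = W^s_p(\R,\omega;L_p(\R^d_+)) \cap L_p(\R,\omega;W^{2ms}_p(\R^d_+))$ and observe that, since $2ms\in\N$, the spatial factor $W^{2ms}_p(\R^d_+)$ is a genuine integer-order Sobolev space, so on $\R^d_+ = \R_{x_1}^+\times\R^{d-1}_{x'}$ we have the standard trace $\mathrm{tr}_{x_1=0}:W^{2ms}_p(\R^d_+)\to W^{2ms-1/p}_p(\R^{d-1})$. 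The nontrivial gain is the simultaneous improvement of $1/(2mp)$ in the time regularity, which requires an intersection/trace argument rather than applying the spatial trace slice-by-slice in $t$.

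The key steps, in order, would be: (i) extend from $\R^d_+$ to $\R^d$ via a bounded extension operator commuting appropriately with the structure (so that it suffices to prove the statement on $\R\times\R^d$); (ii) identify the anisotropic intersection space with a single vector-valued space: using the mixed-derivative / Mikhlin-type multiplier theorem in the $A_p$-weighted setting (as in \cite{MeyThesis,Lin17}), one has $W^{s,2ms}_{p,\omega}(\R^{d+1}) \cong F^{s}_{p,\text{something}}(\R,\omega;L_p(\R^{d-1}))$-type description in the $x_1$ direction after a partial Fourier transform, or more cleanly $W^s_p(\R,\omega;L_p(\R^{d-1}))$-valued Bessel-potential space in $x_1$; (iii) apply the scalar-to-Besov trace theorem at $x_1=0$ for the $L_p(\R,\omega)\cap\cdots$ scale, which for integer $2ms$ costs exactly $1/p$ in the $x_1$-smoothness and, by the anisotropic scaling $|t-s|^{1/(2m)}\sim|x_1|$, costs $1/(2mp)$ in the time smoothness; (iv) invoke the cited results \cite[Lemma 3.5]{DHP07}, \cite[Lemma 1.3.11]{MeyThesis}, and \cite[Sec. 6.3]{Lin17} to justify that the power-weight hypotheses there are needed only to define the temporal trace at $t=0$, which is absent when $t$ ranges over all of $\R$, so a general $\omega\in A_p(\R)$ is admissible; (v) restrict back to $\R^d_+$ on the target side, using that the trace of the extension, restricted to $\{x_1=0\}$, recovers the original trace.

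The main obstacle I expect is step (ii)--(iii): making precise that the anisotropic mixed-norm space behaves, in the $x_1$-variable transverse to the boundary, like a Bessel-potential (or Triebel--Lizorkin $F^{2ms}_{p,\ldots}$) space valued in the temporally-weighted space $W^s_p(\R,\omega;L_p(\R^{d-1}))$, so that a clean one-dimensional trace theorem applies. This needs an $A_p(\R)$-weighted operator-valued Fourier multiplier theorem (the weight sits in the variable \emph{not} being transformed, which is the favorable case) together with the Sobolev-type embedding that produces the $1/(2mp)$ and $1/p$ losses with the correct parabolic weighting. Once the space is identified as a vector-valued $F$-space, the trace itself is standard (retract/coretract via the Fourier-analytic trace operator), so the essential work is the harmonic-analytic identification of the space and a careful bookkeeping of how the weight survives the argument — which is exactly what \cite{Lin17} carries out, and I would cite it for the technical core rather than reproduce it.
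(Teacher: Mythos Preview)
Your proposal is essentially correct and, in fact, goes further than the paper does: the paper does not give a proof of this lemma at all. Immediately before the statement the authors write that ``for full details about the proof'' one should consult \cite[Lemma 3.5]{DHP07} (unweighted case), \cite[Lemma 1.3.11]{MeyThesis} (power-type weights in time), and \cite[Sec.~6.3]{Lin17} for the observation that the power-weight restriction is only needed for the temporal trace at $t=0$, hence any $\omega\in A_p(\R)$ works when $t$ ranges over all of $\R$. That is the entirety of the paper's argument.

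Your plan follows exactly this route --- you even invoke the same three references at step (iv) --- so there is nothing to compare in terms of approach. The only caution is that what you have written is a sketch, not a proof: steps (ii)--(iii) (the identification of the anisotropic intersection space with a vector-valued $F$-space in the transverse variable $x_1$, and the ensuing one-dimensional trace) require the operator-valued multiplier and mixed-derivative machinery you allude to, and carrying this out rigorously is nontrivial. But since the paper itself defers entirely to \cite{Lin17} for this technical core, your proposal is at least as complete as the paper's own treatment.
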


\begin{lemma}\label{lemma:MeyLemma1.3.13}
Let $p\in(1,\infty)$, $\omega\in A_p(\R)$, and let $m\in\N$ and $s\in[0,1)$ be given. Then for every $\varepsilon>0$, for $\beta\in\N_0^n$ with $s+\frac{|\beta|}{2m}+\frac{1}{2mp}<1$, it holds that for $u\in W^{1,2m}_{p,\omega}(\R\times\R^d_+)$,
\begin{align*}
&\|{\rm tr}_{\R^{d-1}}\nabla^{\beta}u\|_{W^{s,2ms}_{p,\omega}(\R\times\R^{d-1})}\\
&\le \varepsilon\|D^{2m}u\|_{L_{p,\omega}(\R\times\R^d_+)}+\varepsilon
\|u_{t}\|_{L_{p,\omega}(\R\times\R^d_+)}
+C_{\varepsilon}\|u\|_{L_{p,\omega}(\R\times\R^d_+)}.
\end{align*}
\end{lemma}

\section{Assumptions and main result}\label{sec:assumptions}

Let $p,q\in (1,\infty)$, and $m=1,2,\ldots$. We consider a $2m$-th order elliptic differential operator $A$ given by
\[
Au=\sum_{|\alpha|\le 2m}a_{\alpha}(t,x)D^{\alpha}u,
\]
where $a_\alpha:\R\times\R^{d}_{+}\rightarrow\C$. For $j=1,\ldots,m$ and $m_j \in\{0,\ldots,2m-1\}$, we consider the boundary differential operators $B_j$ of order $m_j$ given by
\[
B_ju=\sum_{|\beta|\leq m_j}b_{j\beta}(t,x)D^{\beta}u,
\]
where $b_{j\beta}:\R\times\R^{d}_{+}\rightarrow\C$. For convenience, here and in the sequel, we denote $D_{x_{j}}=-i\frac{\partial}{\partial x_{j}}$.

We will give conditions on the operators $A$ and $B_j$ under which the $L_p(L_q)$-estimates hold for the solution to the parabolic problem
\begin{equation}\label{prob:VMOtimespaceLStx}
\begin{cases}
u_t(t,x) + (A+\lambda)u(t,x)=f(t,x) & {\rm in}\ \R\times\R^{d}_{+}\\
B_{j}u(t,x)\bigg|_{x_1=0}=g_j & {\rm on}\ \R\times\R^{d-1},\ \  j=1,\ldots,m.
\end{cases}
\end{equation}
We also consider the corresponding elliptic problem
\begin{equation}\label{prob:ellVMOtimespaceLStx}
\begin{cases}
(A+\lambda)u=f & {\rm in}\quad \R^{d}_{+}\\
B_{j}u\big|_{x_1=0}=g_j & {\rm on}\quad \R^{d-1},\ \  j=1,\ldots,m,
\end{cases}
\end{equation}
where, for the elliptic case, the coefficients of the operators involved are functions independent on $t\in\R$, i.e., defined on $\R^d_+$.

\subsection{Assumptions on $A$ and $B_j$.}

We first introduce a parameter--ellipticity condition in the sense of \cite[Definition 5.1]{DHP}. Denote
$$
A_{\sharp}(t,x,\xi)=\sum_{|\alpha|=2m}a_{\alpha}(t,x)\xi^{\alpha}
$$
to be the \textit{principal symbol} of the operator $A$.
\let\ALTERWERTA\theenumi
\let\ALTERWERTB\labelenumi
\def\theenumi{(E)$_\theta$}
\def\labelenumi{\textbf{(E)}$_\theta$}
\begin{enumerate}
\item\label{as:condell}
Let $\theta\in(0,\pi)$. For all $t\in\R$ and $x\in\R^d_+$, it holds that
\begin{equation*}
\sigma(A_{\sharp}(t,x,\xi))\subset\Sigma_{\theta},\quad  \forall\ \xi\in\R^{n},\ |\xi|=1,
\end{equation*}
for the spectrum of the operator $A_{\sharp}(t,x,\xi)$, where $\Sigma_{\theta}=\{z\in\C\backslash\{0\}:\ |\arg(z)|<\theta\}$ and $\arg : \C\backslash\{0\} \rightarrow (-\pi,\pi]$.
\end{enumerate}
\let\theenumi\ALTERWERTA
\let\labelenumi\ALTERWERTB

The following \ref{as:LScond}-condition is of the Lopatinskii--Shapiro type. Before stating it, we need to introduce some notation.

Denote by
\begin{equation}\label{eq:opH}
A^{H}(t,x,D):=\sum_{|\alpha|=2m}a_{\alpha}(t,x)D^{\alpha}\quad
\text{and}\quad
B^{H}_{j}(D):=\sum_{|\beta|=m_{j}}b_{j\beta}(t,x)D^{\beta}
\end{equation}
the \textit{principal part} of $A$ and $B_{j}$ respectively. For fixed $(t_0,x_0)\in \overline{\bR^{d+1}_+}$, consider the operators $A^H(t_0,x_0,D)$ and $B^{H}_j(t_0,x_0,D)$. Taking the Fourier transform $\mathcal{F}_{x'}$  with respect to $x'\in\R^{d-1}$ and  letting $v(x_1,\xi):=\mathcal{F}_{x'}(u(x_1,\cdot))(\xi)$, we obtain
\begin{align*}
	A^H(t_0,x_0,\xi,D_{x_1})v
	&:=\mathcal{F}_{x'}(A^H(t_0,x_0,D)u(x_1,\cdot))(\xi)\\
	&= \sum_{k=0}^{2m}\sum_{|\beta|=k}a_{k,\beta}(t_0,x_0)\xi^{\beta}D_{x_{1}}^{2m-k}v
\end{align*}
and
\begin{align*}
	B^H_{j}(t_0,x_0,\xi,D_{x_1})v
	&:=\mathcal{F}_{x'}(B^H_{j}(t_0,x_0,D)u(x_1,\cdot))(\xi)\\	&=\sum_{k=0}^{m_j}\sum_{|\gamma|=k}b_{j,k,\gamma}(t_0,x_0)\xi^{\gamma}D_{x_{1}}^{m_j-k}v.
\end{align*}

Throughout the paper, let $R_0\in (0,1]$, $K>0$, and $\theta\in (0,\pi/2)$ be fixed constants.
\let\ALTERWERTA\theenumi
\let\ALTERWERTB\labelenumi
\def\theenumi{(LS)$_\theta$}
\def\labelenumi{\textbf{(LS)$_\theta$}}
\begin{enumerate}
	\item\label{as:LScond}
For each $(h_{1},\ldots,h_{m})^{T}\in\R^{m}$, each $\xi\in\R^{d-1}$, $\displaystyle \lambda\in\overline{\Sigma}_{\pi-\theta}$, and $(t_0,x_0)\in \overline{\bR^{d+1}_+}$, such that $|\xi|+|\lambda|\neq 0$ and $x_{0,1}\le 2R_0$, the ODE problem in $\R_+$
\begin{equation*}
\begin{cases}
\lambda v+ A^H(t_0,x_0,\xi,D_{x_1})v=0,\quad  x_1 >0,\\
B^H_{j}(t_0,x_0,\xi,D_{x_1})v\bigg|_{x_1=0}=h_{j},\quad  j=1,\ldots,m
\end{cases}
\end{equation*}
admits a unique solution $v\in C^{\infty}(\R_+)$ such that $\lim_{x\rightarrow \infty}v(x)=0$.
\end{enumerate}
\let\theenumi\ALTERWERTA
\let\labelenumi\ALTERWERTB

We now introduce a regularity condition on the leading coefficients, where $\rho$ is a parameter to be specified.
\begin{assumption}[$\rho$] \label{ass:VMO}
For $|\alpha|=2m$, there exist a constant $R_0\in (0,1]$ such that $(a_{\alpha})^{\sharp}_{R_0}\le \rho$.
\end{assumption}	

Throughout the paper, we impose the following assumptions on the coefficients of $A$ and $B_j$.
\let\ALTERWERTA\theenumi
\let\ALTERWERTB\labelenumi
\def\theenumi{(A)}
\def\labelenumi{\textbf{(A)}}
\begin{enumerate}
\item\label{as:operatorALS} For the multi-index $\alpha$, the coefficients $a_{\alpha}$ are functions
$\R\times\R^{d}_{+}\rightarrow\C$, $\|a_\alpha\|_{L_\infty}\le K$,
and satisfy Assumption \ref{ass:VMO} ($\rho$) with a parameter $\rho\in (0,1)$ to be determined later.
Moreover, $A$ satisfies condition $(E)_{\theta}$.
\end{enumerate}
\let\theenumi\ALTERWERTA
\let\labelenumi\ALTERWERTB
	
\let\ALTERWERTA\theenumi
\let\ALTERWERTB\labelenumi
\def\theenumi{(B)}
\def\labelenumi{\textbf{(B)}}
\begin{enumerate}
\item\label{as:operatorBLS} The coefficients $b_{j\beta}:\R\times\R^{d}_{+}\rightarrow\C $ satisfy
$$
b_{j\beta}\in C^{\frac{2m-m_{j}}{2m},2m-m_{j}}(\R^{d+1}_{+}),\quad
\|b_{j\beta}\|_{C^{\frac{2m-m_{j}}{2m},2m-m_{j}}}\leq K,
$$
and
$$
\lim_{|t|+|x|\rightarrow \infty}b_{j\beta}(t,x)=\overline{b}_{j\beta}.
$$
The \ref{as:LScond}-condition is satisfied by $(A,\overline{B}_{j})$ for any $A\in (E)_{\theta}$, where $\overline{B}_{j}$, $j=1,\ldots,m$, are the boundary operators with coefficients $\overline{b}_{j\beta}$.
\end{enumerate}

We can now state the main result of this paper.

\begin{theorem}\label{thm:VMOproblemLStx}
Let $T\in(-\infty,\infty]$, $p,q\in(1,\infty)$ and $\omega\in A_{p}((-\infty,T))$. There exists
$$
\rho=\rho(\theta,m,d,K,p,q,[\omega]_{p},b_{j\beta})\in (0,1)
$$
such that under the assumptions \ref{as:operatorALS}, \ref{as:operatorBLS}, and \ref{as:LScond} the following hold. There exists $\lambda_0=\lambda_0(\theta,m,d,K,p,q,[\omega]_{p},R_0,b_{j\beta})\geq 1$ such that for every $\lambda\geq\lambda_0$, for $u\in W^{1}_{p}((-\infty,T),\omega;L_{q}(\R^d_+))\cap L_{p}((-\infty,T),\omega;W^{2m}_{q}(\R^d_+))$ satisfying the problem \eqref{prob:VMOtimespaceLStx}, where $f\in L_{p,q,\omega}((-\infty,T)\times \R^d_+)$ and $$
g_j\in F^{k_j}_{p,q}((-\infty,T),\omega;L_{q}(\R^{d-1}))\cap L_{p}((-\infty,T),\omega;\B_{q,q}^{2mk_j}(\R^{d-1}))
$$
with $k_j=1-m_j/(2m)-1/(2mq)$, it holds that
\begin{align}\label{eq:VMOtimespaceLSgjtx}
&\|u_t\|_{L_{p}((-\infty,T),\omega;L_{q}(\R^{d}_+))}+\sum_{|\alpha|\le 2m}\lambda^{1-\frac{|\alpha|}{2m}}\|D^{\alpha}u\|_{L_{p}
((-\infty,T),\omega;L_{q}(\R^{d}_+))}\nonumber\\
&\le C\|f\|_{L_{p}((-\infty,T),\omega;L_{q}(\R^{d}_+))} \nonumber\\
&\quad+ C\sum_{j=1}^m
\|g_j\|_{F^{k_j}_{p,q}((-\infty,T),\omega;L_{q}(\R^{d-1}))\cap L_{p}((-\infty,T),\omega;\B_{q,q}^{2mk_j}(\R^{d-1}))},
\end{align}
where $C=C(\theta,m,d,K,p,q,[\omega]_{p},b_{j\beta})>0$ is a constant.
\end{theorem}

Using the same arguments as in \cite[Theorem 3.5]{DG17}, from the a priori estimates for the parabolic equation in Theorem \ref{thm:VMOproblemLStx}, we obtain the a priori estimates for the higher-order elliptic equation as well. The key idea is that the solutions to elliptic equations can be viewed as steady state solutions to the corresponding parabolic cases. The argument is quite standard, so we omit the proof. The interested reader can find more details in \cite[Theorem 5.5]{DK16} and \cite[Theorem 2.6]{Kry07}.

We state below the elliptic version of Theorem \ref{thm:VMOproblemLStx}. In this case the coefficients of $A$ and $B_j$ are independent of $t$.

\begin{theorem}\label{thm:VMOellipticLStx}
Let $q\in(1,\infty)$. There exists
$$
\rho=\rho(\theta,m,d,K,q,b_{j\beta})\in (0,1)
$$
such that under the assumptions \ref{as:operatorALS}, \ref{as:operatorBLS}, and \ref{as:LScond}, the following hold. There exists $\lambda_0=\lambda_0(\theta,m,d,K,q,R_0,b_{j\beta})\geq 0$ such that for any $\lambda\geq\lambda_0$ and $u\in W^{2m}_{q}(\R^d_+)$ satisfying
\begin{equation*}
\begin{cases}
(A+\lambda)u=f & {\rm in}\ \R^{d}_{+}\\
B_{j}u\big|_{x_1=0}=g_j & {\rm on}\ \R^{d-1},
\end{cases}
\end{equation*} where $f\in L_{q}(\R^d_+)$ and $g_j\in \B_{q,q}^{2mk_j}(\R^{d-1})$ with $k_j=1-m_j/(2m)-1/(2mq)$, it holds that
\begin{equation*}
\sum_{|\alpha|\le 2m}\lambda^{1-\frac{|\alpha|}{2m}}\|D^{\alpha}u\|_{L_{q}(\R^{d}_+)}\le C\|f\|_{L_{q}(\R^{d}_+)}+ C\sum_{j=1}^m\|g_j\|_{\B_{q,q}^{2mk_j}(\R^{d-1})},
\end{equation*}
where $C=C(\theta,m,d,K,q,b_{j\beta})>0$ is a constant.
\end{theorem}

In the last section, we shall illustrate how to obtain from the above theorems the corresponding results for equations in bounded and smooth domains.

\section{An auxiliary result}\label{sec:auxres}
Throughout the section, we assume that $A$ and $B_j$ consist only of their principal part.

Let
$$A_0=\sum_{|\alpha|=2m}\bar a_{\alpha}D^{\alpha}$$
be an operator with constant coefficients satisfying $|\overline{a}_{\alpha}|\le K$ and the condition \ref{as:condell} with $\theta\in(0,\pi/2)$, and let
\[
\overline{B}_j=\sum_{|\beta|=m_j}\overline{b}_{j\beta}D^{\beta},
\]
where the coefficients $\overline{b}_{j\beta}$ are also constants.

We prove an auxiliary estimate, which is derived from a result in \cite{Lin17}. For a weight $\omega\in A_q(\R)$, we denote in the following $L_{q,\omega}(\R\times \R^{d}_{+}):=L_{q}(\R,\omega;L_{q}(\R^{d}_{+}))$.

\begin{lemma}\label{lemma:estconstcoeff}
Let $T\in(-\infty,+\infty]$, $q\in(1,\infty)$, and $\omega\in A_q(-\infty,T)$. Let $A_0$ and $\overline{B}_j$ be as above. Assume that for some $\theta\in(0,\pi/2)$,  $(A_0,\overline{B}_{j})$ satisfies the \ref{as:LScond}-condition.
Then for every $f\in L_{q,\omega}((-\infty,T)\times \R^{d}_{+})$ and
$$
g_j\in W^{k_j,2mk_j}_{q,\omega}((-\infty,T)\times\R^{d-1})
$$
with $j\in\{1,\ldots,m\}$, $m_j \in\{0,\ldots,2m-1\}$, $k_j=1-m_j/(2m)-1/(2mq)$ and $u\in W^{1,2m}_{q,\omega}((-\infty,T)\times \R^{d}_{+})$ satisfying
\begin{equation}\label{prob:lemmaestconstcoeff}
\begin{cases}
u_t(t,x) + (\lambda+A_0)u(t,x)=f(t,x) & {\rm in}\ (-\infty,T)\times\R^{d}_{+}\\
\overline{B}_{j}u(t,x)\big|_{x_1=0}=g_j(t,x) & {\rm on}\ (-\infty,T)\times\R^{d-1},
\end{cases}
\end{equation}
with $\lambda\ge 0$, we have
\begin{multline}\label{eq:lemmaestconstcoeff}
\|u_t\|_{L_{q,\omega}((-\infty,T)\times \R^{d}_{+})}+\sum_{|\alpha|\le 2m}\lambda^{1-\frac{|\alpha|}{2m}}\|D^{\alpha}u\|_{L_{q,\omega}((-\infty,T)\times \R^{d}_{+})}\\
\le C \|f\|_{L_{q,\omega}((-\infty,T)\times \R^{d}_{+})}
+ C\sum_{j=1}^{m}\|g_j\|_{W^{k_j,2mk_j}_{q,\omega}((-\infty,T)\times\R^{d-1})},
\end{multline}
with $C=C(\theta,m,d,K,q,b_{j\beta},[\omega]_{q})>0$. Moreover, for any $\lambda>0$,
$$
f\in L_{q,\omega}((-\infty,T)\times \R^{d}_{+})\quad \text{and}\quad
g_j\in W^{k_j,2mk_j}_{q,\omega}((-\infty,T)\times\R^{d-1})
$$
with $j$, $m_j$, and $k_j$ as above, there exists a unique solution $u\in W^{1,2m}_{q,\omega}((-\infty,T)\times \R^{d}_{+})$ to \eqref{prob:lemmaestconstcoeff}.
\end{lemma}

\begin{proof}
Consider first  $T=\infty$. For any $\omega\in A_q(\R)$, let $u\in W^{1,2m}_{q,\omega}(\R_+\times \R^{d}_{+})$ be a solution to \eqref{prob:lemmaestconstcoeff}.

Decompose $u=v+w$, where:
\begin{itemize}
\item $w\in W^{1,2m}_{q,\omega}(\R^{d+1}_{+})$ is the solution to the inhomogeneous problem
\begin{equation}\label{prob:VMOtimespaceW}
\begin{cases}
w_t + (A_0+\lambda)w=f & {\rm in}\ \R\times\R^{d}_{+}\\
\overline{B}_{j} w\big|_{x_{1}=0}=0 & {\rm on}\ \partial\R^{d+1}_{+},\ j=1,\ldots,m\\
\end{cases}
\end{equation}
\item $v\in W^{1,2m}_{q,\omega}(\R^{d+1}_{+})$ is the solution to the homogeneous problem
\begin{equation}\label{prob:VMOtimespaceV}
\begin{cases}
v_t + (A_0+\lambda)v=0 & {\rm in}\ \R\times\R^{d}_{+}\\
\overline{B}_{j} v\big|_{x_1=0}=g_j(t,x) & {\rm on}\ \R\times\R^{d-1},\ \  j=1,\ldots,m.
\end{cases}
\end{equation}
\end{itemize}

It follows directly from \cite[Theorem 3.4 (i)]{DG17} with $p=q$ that the solution $w\in W^{1,2m}_{q,\omega}(\R^{d+1}_+)$ of \eqref{prob:VMOtimespaceW} satisfies
\begin{equation}\label{eq:estw}
\|w_{t}\|_{L_{q,\omega}(\R^{d+1}_{+})}+\sum_{|\alpha|\le 2m}\lambda^{1-\frac{|\alpha|}{2m}}\|D^{\alpha}w\|_{L_{q,\omega}(\R^{d+1}_{+})}\le C\|f\|_{L_{q,\omega}(\R^{d+1}_+)},
\end{equation}
with $C=C(\theta,K,d,m,q,b_{j\beta},[\omega]_{q})$.

Consider now \eqref{prob:VMOtimespaceV}. Since $A_0$ and $\overline{B}_{j}$ have constant coefficients,  using a scaling $t\rightarrow \lambda^{-1} t$, $x\rightarrow \lambda^{-1/2m}x$, for a general $\lambda\in (0,1)$, we get that $\tilde{v}(t,x):=v(\lambda^{-1}t,\lambda^{-1/2m}x)$ satisfies
\begin{equation}\label{prob:lemmadhpscaling}
\begin{cases}
\tilde{v}_t (t,x) + (1+A_0)\tilde{v}(t,x)=0 & {\rm in}\ \R\times\R^{d}_{+}\\
\overline{B}_{j}\tilde{v}(t,x)\big|_{x_1=0}=\tilde{g}_{j}(t,x) & {\rm on}\ \R\times\R^{d-1},
\end{cases}
\end{equation}
where
$$
\tilde{g}_j(t,x)=\lambda^{-m_j/2m}g_{j}(\lambda^{-1}t,\lambda^{-1/2m}x).
$$
Note that $\tilde \omega(t):=\omega(\lambda^{-1}t)\in A_q(\bR)$ and $[\tilde \omega]_q=[\omega]_q$.
Applying \cite[Lemma 6.6]{Lin17} to \eqref{prob:lemmadhpscaling} with $p=q$ and $\gamma=0$, we get that the solution $\tilde{v}\in W^{1,2m}_{q,\omega}(\R^{d+1}_+)$ to \eqref{prob:lemmadhpscaling} satisfies
\begin{equation*}
\|\tilde{v}_t\|_{L_{q,\tilde\omega}(\R^{d+1}_{+})}
+\|D^{2m}\tilde{v}\|_{L_{q,\tilde\omega}(\R^{d+1}_{+})}
\le C\sum_{j=1}^{m}\|\tilde{g}_j\|_{W^{k_j,2mk_j}_{q,\tilde \omega}(\R\times\R^{d-1})},
\end{equation*}
with $C=C(\theta,m,d,K,q,b_{j\beta},[\omega]_{q})$. We remark that although the estimate is not explicitly stated in this reference, it can be extracted from the proof there.

Now scaling back and using Definition \ref{def:TLspace}, it is easily seen that
\begin{equation*}
\|v_t\|_{L_{q,\omega}(\R^{d+1}_{+})}+\|D^{2m}v\|_{L_{q,\omega}(\R^{d+1}_{+})}
\le  C\sum_{j=1}^{m}\|g_j\|_{W^{k_j,2mk_j}_{q,\omega}(\R\times\R^{d-1})},
\end{equation*}
with the constant $C$ independent of $\lambda\in (0,1)$.
Sending $\lambda\rightarrow 0$, we obtain that the above estimate holds when $\lambda=0$.
Finally, by applying an argument of S. Agmon as in \cite[Theorem 4.1]{Kry07}, from the above estimate with $\lambda=0$ it follows that when $\lambda>0$,
\begin{multline}\label{eq:estagmonv}
\|v_t\|_{L_{q,\omega}(\R^{d+1}_{+})}+\sum_{|\alpha|\leq 2m}\lambda^{1-\frac{|\alpha|}{2m}}\|D^{\alpha}v\|_{L_{q,\omega}(\R^{d+1}_{+})}\le  C\sum_{j=1}^{m}\|g_j\|_{W^{k_j,2mk_j}_{q,\omega}(\R\times\R^{d-1})},
\end{multline}
with constant $C=C(\theta,m,d,K,q,b_{j\beta},[\omega]_{q})$.
Since $u=w+v$, by \eqref{eq:estw} and \eqref{eq:estagmonv} we get \eqref{eq:lemmaestconstcoeff} with $T=\infty$ and $C=C(\theta,m,d,K,q,b_{j\beta},[\omega]_{q})>0$. The solvability follows directly by the solvability argument in \cite[Section 6]{DG17}, or the argument in \cite[Lemma 6.6]{Lin17}.

The proof for $T<\infty$ follows now the lines of \cite[Lemma 4.1]{DG17}, so we omit the details.
\end{proof}

\section{Proof of Theorem \ref{thm:VMOproblemLStx}}\label{sec:proofmainresult}
The proof of Theorem  \ref{thm:VMOproblemLStx} is divided into several steps. From Steps 1 to 3, we will assume $p=q\in(1,\infty)$ and we will show that the estimate \eqref{eq:VMOtimespaceLSgjtx} holds in this case. In Step 4, we will extrapolate the estimate from the previous steps to the case $p\neq q$ and complete the proof.

\begin{proof}[Proof of Theorem \ref{thm:VMOproblemLStx}]
It suffices to consider $T=\infty$. For the general case when $T\in (-\infty,\infty]$, we can follow the proof of \cite[Lemma 4.1]{DG17}, so we omit the details.

Recall that the lower-order coefficients in $A$ are bounded by $K$. By moving the terms $a_{\alpha}(t,x)D^{\alpha}u$ with $|\alpha|<2m$ to the right-hand side of the equation and taking a sufficiently large $\lambda$,
we may assume the lower-order coefficients of $A$ to be all zero.

Denote
$$
\tilde Q_r(t_0,x_0)=[t_0-r^{2m},t_0+r^{2m})\times C^+_{2r}(x_0),
$$
where
$C^+_{2r}(x_0)$ denotes a cube centered in $x_0$ having side-length $2r$
and axes parallel to the coordinate axes, intersected with the half space $\R^d_+$.

Let $R$ be a large constant to be specified.

\emph{Step 1.} We first consider the case $p=q$. We assume that there exists a constant $\Lambda_0\geq 1$ such that $[\omega]_{q}\leq \Lambda_0$ and we assume that $u$ is supported in $\overline{\R^{d+1}_+}\setminus \overline{\tilde Q_R}$.  Fix a point $(t_0,x_0)\in \overline{\R^{d+1}_+}\setminus \overline{\tilde Q_R}$ and set
\[
A(t_0,x_0)u=\sum_{|\alpha|=2m}a_{\alpha}(t_0,x_0)D^{\alpha}u,
\] Decompose $u=u_1+u_2$, where $u_1$ is a solution to
\begin{equation}
\label{eq4.55b}
\begin{cases}
\partial_t u_1+(\lambda+A(t_0,x_0))u_1=0 & {\rm in}\ \R^{d+1}_{+}\\
\sum_{|\beta|=m_j}\bar b_{j\beta}D^{\beta}u_1=- \sum_{|\beta|<m_j}b_{j\beta}(t,x)D^{\beta}u \\
\quad+ \sum_{|\beta|=m_j}(\bar b_{j\beta}-b_{j\beta}(t,x))D^{\beta}u +  g_j& {\rm on}\ \partial \R^{d+1}_{+},
\end{cases}
\end{equation}
and $u_2$ is a solution to
\begin{equation}\label{eq4.57b}
\begin{cases}
\partial_t u_2+(\lambda+A)u_2=f-(A-A(t_0,x_0))u_1 & {\rm in}\ \R^{d+1}_{+}\\
\sum_{|\beta|=m_j}\bar b_{j\beta}D^{\beta}u_2=0& {\rm on}\ \partial\R^{d+1}_{+}.
\end{cases}
\end{equation}

By Lemma \ref{lemma:estconstcoeff}, we first solve \eqref{eq4.55b}. It follows from Lemma \ref{thm:MeyLemma1.3.11} that
\begin{align*}
&\|\partial_t u_1\|_{L_{q,\omega}(\R^{d+1}_{+})}+\sum_{|\alpha|\leq 2m}\lambda^{1-\frac{|\alpha|}{2m}}\|D^{\alpha}u_1\|_{L_{q,\omega}(\R^{d+1}_{+})}\\
&\leq C\Big\|\sum_{|\beta|<m_j}b_{j\beta}D^{\beta}u\Big\|_{W^{\frac{2m-m_j}{2m},2m-m_j}_{q,\omega}(\R^{d+1}_{+})}+ C\sum_{j=1}^m\|g_j\|_{W^{k_j,2mk_j}_{q,\omega}(\partial \R^{d+1}_{+})}\\
&\ \ \ +C\Big\|\sum_{|\beta|=m_j}(\overline{b}_{j\beta}-b_{j\beta})D^{\beta}u\Big\|_{W^{\frac{2m-m_j}{2m},2m-m_j}_{q,\omega}(\R^{d+1}_{+})}.
\end{align*}
Since $b_{j\beta}(t,x)\rightarrow\overline{b}_{j\beta}$ for
$|t|+|x|\rightarrow\infty$, given $\varepsilon>0$ and taking $R>0$ large enough it holds that
\[
\sup_{(t,x)\in \overline{\R^{d+1}_+}\setminus \overline{\tilde Q_R}}|\overline{b}_{j\beta}-b_{j\beta}(t,x)|<\varepsilon.
\]
This and the interpolation lemma \ref{lemma:MeyLemma1.3.13} yields that
\begin{equation}\label{eq:prob4.55b}
\begin{aligned}
&\|\partial_t u_1\|_{L_{q,\omega}(\R^{d+1}_{+})}+\sum_{|\alpha|\leq 2m}\lambda^{1-\frac{|\alpha|}{2m}}\|D^{\alpha}u_1\|_{L_{q,\omega}(\R^{d+1}_{+})}\\
&\leq C\sum_{j=1}^m\|g_j\|_{W^{k_j,2mk_j}_{q,\omega}(\partial \R^{d+1}_{+})} + C_{K}\Big(\varepsilon\|D^{2m}u\|_{L_{q,\omega}(\R^{d+1}_{+})}+\varepsilon\|u_{t}\|_{L_{q,\omega}(\R^{d+1}_{+})}\\
&\ \ \ + C_{\varepsilon}\|u\|_{L_{q,\omega}(\R^{d+1}_{+})}\Big).
\end{aligned}
\end{equation}
Now, \cite[Theorem 3.4]{DG17} with $w=1$ applied to \eqref{eq4.57b} yields that for $\lambda\geq \lambda_0$, where $\lambda_0>0$ depends only on the constant $C_2$ from \cite[Proposition 5.2]{DG17}, it holds that
\begin{equation}\label{eq:prob4.57b}
\begin{aligned}
&\|\partial_t u_2\|_{L_{q,\omega}(\R^{d+1}_{+})}+\sum_{|\alpha|\leq 2m}\lambda^{1-\frac{|\alpha|}{2m}}\|D^{\alpha}u_2\|_{L_{q,\omega}(\R^{d+1}_{+})} \\
&\leq C\|f\|_{L_{q,\omega}(\R^{d+1}_{+})}+C\|(A-A(t_0,x_0))u_1\|_{L_{q,\omega}(\R^{d+1}_{+})}\\
&\leq C\|f\|_{L_{q,\omega}(\R^{d+1}_{+})}+C_{K}\sum_{|\alpha|\leq 2m}\|D^{\alpha}u_1\|_{L_{q,\omega}(\R^{d+1}_{+})},
\end{aligned}
\end{equation}
provided that $\rho\le \bar\rho$, where $\bar\rho>0$ is a constant depending only on $\theta$, $m$, $d$, $K$, $q$, $[\omega]_q$, and $\bar b_{j\beta}$.
Since $u=u_1+u_2$, by \eqref{eq:prob4.55b} and  \eqref{eq:prob4.57b}, it follows that
\begin{align*}
&\|u_t\|_{L_{q,\omega}(\R^{d+1}_{+})}+\sum_{|\alpha|\leq 2m}\lambda^{1-\frac{|\alpha|}{2m}}\|D^{\alpha}u\|_{L_{q,\omega}(\R^{d+1}_{+})}\\
&\leq \|\partial_t u_1\|_{L_{q,\omega}(\R^{d+1}_{+})}+\sum_{|\alpha|\leq 2m}\lambda^{1-\frac{|\alpha|}{2m}}\|D^{\alpha}u_1\|_{L_{q,\omega}(\R^{d+1}_{+})}\\
&\ \ \ +\|\partial_t u_2\|_{L_{q,\omega}(\R^{d+1}_{+})}+\sum_{|\alpha|\leq 2m}\lambda^{1-\frac{|\alpha|}{2m}}\|D^{\alpha}u_2\|_{L_{q,\omega}(\R^{d+1}_{+})}\\
&\leq \|\partial_t u_1\|_{L_{q,\omega}(\R^{d+1}_{+})}+\sum_{|\alpha|\leq 2m}\lambda^{1-\frac{|\alpha|}{2m}}\|D^{\alpha}u_1\|_{L_{q,\omega}(\R^{d+1}_{+})}\\
&\ \ \ +C_{K}\sum_{|\alpha|\leq 2m}\|D^{\alpha}u_1\|_{L_{q,\omega}(\R^{d+1}_{+})}+C\|f\|_{L_{q,\omega}(\R^{d+1}_{+})}\\
&\leq C\|f\|_{L_{q,\omega}(\R^{d+1}_{+})}+C_{K}\|g_j\|_{W^{k_j,2mk_j}_{q,\omega}(\partial \R^{d+1}_{+})}\\ &\ \ \ + C_{K}(\varepsilon\|D^{2m}u\|_{L_{q,\omega}(\R^{d+1}_{+})}
+\varepsilon\|u_{t}\|_{L_{q,\omega}(\R^{d+1}_{+})}
+C_{\varepsilon}\|u\|_{L_{q,\omega}(\R^{d+1}_{+})}).
\end{align*}
Now taking $\varepsilon$ small enough so that $C_{K}\varepsilon \leq 1/2$ and $\lambda\geq \bar\lambda:=\max\{\lambda_0,2C_{K}C_{\varepsilon}\}$, we get \eqref{eq:VMOtimespaceLSgjtx} for $u$ with support in $\overline{\R^{d+1}_+}\setminus \overline{\tilde Q_R}$.

\emph{ Step 2.} Let $\varepsilon$ be a small constant to be specified.
For any $(t_0,x_0)\in \overline{\tilde Q_{R+1}}$, by the stability of the \ref{as:LScond}-condition (see for instance \cite[Remark 7.10]{DHP}) and the continuity of $b_{j\beta}$, there exists $r_{t_0,x_0}\in (0,R_0)$ such that the \ref{as:LScond}-condition is satisfied by $(A(t,x),B_j(t_0,x_0))$ for any $(t,x)\in \overline{\tilde Q_{r_{t_0,x_0}}(t_0,x_0)}$ and \[
\sup_{(t,x)\in\overline{\tilde{Q}_{r_{t_0,x_0}}(t_0,x_0)}}|b_{j\beta}(t_0,x_0)-b_{j\beta}(t,x)|<\varepsilon.
\]
Assume that $u$ is supported on $\tilde Q_{\kappa_{t_0,x_0}^{-2}  r_{t_0,x_0}}(t_0,x_0)$, where $\kappa_{t_0,x_0}$ is a large constant to be determined later. We only focus on the case when $x_0^1\le R_0$. The interior case $x_0^1>R_0$, follows directly by \cite[Section 5]{DK11}, since in this case there are no boundary conditions involved.

 Similarly, we decompose $u=u_1+u_2$, where $u_1$ is a solution to
\begin{equation}
\label{eq4.55}
\begin{cases}
\partial_t u_1+(\lambda+A(t_0,x_0))u_1=0 & {\rm in}\ \R^{d+1}_{+}\\
\sum_{|\beta|=m_j}b_{j\beta}(t_0,x_0)D^{\beta}u_1=- \sum_{|\beta|<m_j}b_{j\beta}(t,x)D^{\beta}u \\
\quad+ \sum_{|\beta|=m_j}(b_{j\beta}(t_0,x_0)-b_{j\beta}(t,x))D^{\beta}u +  g_j& {\rm on}\ \partial \R^{d+1}_{+},
\end{cases}
\end{equation}
and $u_2$ is a solution to
\begin{equation}\label{eq4.57}
\begin{cases}
\partial_t u_2+(\lambda+A)u_2=f-(A-A(t_0,x_0))u_1 & {\rm in}\ \R^{d+1}_{+}\\
\sum_{|\beta|=m_j}b_{j\beta}(t_0,x_0)D^{\beta}u_2=0& {\rm on}\ \partial\R^{d+1}_{+}.
\end{cases}
\end{equation}

By Lemma \ref{lemma:estconstcoeff}, we first solve \eqref{eq4.55}. It follows from Lemma \ref{thm:MeyLemma1.3.11} that
\begin{equation}\label{eq:prob4.55}
\begin{aligned}
&\|\partial_t u_1\|_{L_{q,\omega}(\R^{d+1}_{+})}+\sum_{|\alpha|\leq 2m}\lambda^{1-\frac{|\alpha|}{2m}}\|D^{\alpha}u_1\|_{L_{q,\omega}(\R^{d+1}_{+})}\\
&\leq C\Big\|\sum_{|\beta|<m_j}b_{j\beta}D^{\beta}u\Big\|_{W^{\frac{2m-m_j}{2m},2m-m_j}_{q,\omega}(\R^{d+1}_{+})}+ C\sum_{j=1}^m\|g_j\|_{W^{k_j,2mk_j}_{q,\omega}(\partial \R^{d+1}_{+})}\\
&\ \ \ +C\Big\|\sum_{|\beta|=m_j}(b_{j\beta}(t_0,x_0)-b_{j\beta})D^{\beta}u\Big\|_{W^{\frac{2m-m_j}{2m},2m-m_j}_{q,\omega}(\R^{d+1}_{+})}\\
&\leq C\sum_{j=1}^m\|g_j\|_{W^{k_j,2mk_j}_{q,\omega}(\partial \R^{d+1}_{+})} + C_{K}\Big(\varepsilon\|D^{2m}u\|_{L_{q,\omega}(\R^{d+1}_{+})}+\varepsilon\|u_{t}\|_{L_{q,\omega}(\R^{d+1}_{+})}\\
&\ \ \ + C_{\varepsilon}\|u\|_{L_{q,\omega}(\R^{d+1}_{+})}\Big).
\end{aligned}
\end{equation}
where in the last estimate we used the interpolation Lemma \ref{lemma:MeyLemma1.3.13} and the smoothness assumption of the coefficients $b_{j\beta}$.

In order to deal with \eqref{eq4.57}, we exploit the property that $u$ has a small support. We shall first establish mean oscillation estimates in
$$
\cX:=\tilde Q_{\kappa^{-1}_{t_0,x_0} r_{t_0,x_0}}(t_0,x_0).
$$
For this, we take a dyadic decomposition of $\cX$ given by
\begin{align*}
\C_n=\big\{&Q^n=Q^n_i:\ i=(i_0,i_1,\ldots,i_d)\in\Z^{d+1},\\
&\quad i_0=0,\ldots,2^{nm}-1,i_k=0,\ldots,2^n-1
,k=1,\ldots,d\big\},
\end{align*}
where $n\in\Z$, and for $x_0=(x_0^1, x_0 ')\in\R_+\times\R^{d-1}$ and $r_0:=\kappa_{t_0,x_0}^{-1}r_{t_0,x_0}$,
\begin{align*}
&Q^n_i=\{t_0-r_0^{2m}+r_0^{2m}2^{-nm+1}([0,1)+i_0)\}\\
&\times
\big\{\max(0,x_0^1-r_0)+\min(r_0,x_0^1)2^{-n}
([0,1)+i_1)\big\}\\
&\times
\big\{x_0'-r_0(1,\ldots,1)+r_02^{-n+1}([0,1)^{d-1}+(i_2,\ldots,i_d))\big\}.
\end{align*}
Then for each $X\in\cX$ and $Q^n\in C_n$ such that $X\in Q^n$, one can find $X_0 \in\cX$ and the smallest $r\in(0,R_0)$ such that $Q^n\subset Q^{+}_{r}(X_0)$ and
\begin{equation}\label{eq:diadycest}
\dashint_{Q_n}|f(Y)-f_{|_{n}}(X)|dY\leq C\ \dashint_{Q^{+}_{r}(X_0)}|f(Y)-(f)_{Q^{+}_r(X_0)}| dY,
\end{equation}
with $C=C(d,m)$, where $f_{|_{n}}(X)=\dashint_{Q_n}f(Y)dY$.
For $x\in\cX$, we define the dyadic sharp function of $f$ by
\begin{align*}
f^{\sharp}_{dy}(x)&:=\sup_{n<\infty}\dashint_{Q^n\ni x}|f(y)-f_{|_{n}}|(x)dy.
\end{align*}

Recall that there is a positive number $\sigma_1=\sigma_1(q,[\omega]_{q})$ such that $q-\sigma_1>1$ and
\[
\omega\in A_{q-\sigma_1}(\R).
\]
We take $q_0,\mu\in (1,q)$ satisfying $\displaystyle q_0 \mu=\frac{q}{q-\sigma_1}>1$.
Then it holds that
\begin{equation}\label{eq:weightinclusion}
\omega\in A_{q-\sigma_1}(\R)= A_{q/(q_0 \mu)}(\R)\subset A_{q/q_0}(\R).
\end{equation}
By \eqref{eq:diadycest} and the mean oscillation estimates of \cite[Lemma 4.6]{DG17} with $\kappa=\kappa_{t_0,x_0}$ and $\mu,\varsigma$ satisfying $\frac{1}{\mu}+\frac{1}{\varsigma}=1$,
\begin{align*}
&\dashint_{Q_n}|\partial_t u_2(Y)-(\partial_t u_2)_{|n}(X)|dY+\sum_{\substack{|\alpha|\leq 2m\\ \alpha_1<2m}}\lambda^{1-\frac{|\alpha|}{2m}}
\dashint_{Q_n}|D^{\alpha}u_2(Y)-(D^{\alpha}u_2)_{|n}(X)|dY
\\
&\leq C\kappa_{t_0,x_0}^{-(1-\frac{1}{q_0})}\sum_{|\alpha|\leq 2m}\lambda^{1-\frac{|\alpha|}{2m}}(|D^{\alpha}u_2|^{q_0})^{\frac{1}{q_0}}_{Q^{+}_{\kappa_{t_0,x_0} r}(X_0)}+C\kappa_{t_0,x_0}^{\frac{d+2m}{q_0}}(|h|^{q_0})^{\frac{1}{q_0}}_{Q^{+}_{\kappa_{t_0,x_0} r}(X_0)}\\
&\quad +C\kappa_{t_0,x_0}^{\frac{d+2m}{q_0}}\rho^{\frac{1}{q_0\varsigma}}(|D^{2m}u_2|^{q\mu})^{\frac{1}{q_0\mu}}_{Q^{+}_{\kappa_{t_0,x_0} r}(X_0)},
\end{align*}
where $h:=f-(A-A(t_0,x_0))u_1$.
Taking the supremum with respect to all $Q^n\ni X$, $n\in\Z$, we see that for all $X\in\cX$,
\begin{equation}\label{eq:ineqbeforenorms}
\begin{aligned}
&(\partial_{t}u_2)^{\sharp}_{dy}(X)+\sum_{\substack{|\alpha|\leq 2m, \alpha_1<2m}}\lambda^{1-\frac{|\alpha|}{2m}}(D^{\alpha}u_2)^{\sharp}_{dy}(X)\\
&\ \leq C\kappa_{t_0,x_0}^{-(1-\frac{1}{q_0})}\sum_{|\alpha|\leq 2m}\lambda^{1-\frac{|\alpha|}{2m}}[\mathcal{M}(|D^{\alpha}u_2|^{q_0})(X)]^{\frac{1}{q_0}}\\
&\ \ \ +C\kappa_{t_0,x_0}^{\frac{d+2m}{q_0}}[\mathcal{M}(|h|^{q_0})(X)]^{\frac{1}{q_0}}+C\kappa_{t_0,x_0}^{\frac{d+2m}{q_0}}\rho^{\frac{1}{q_0 \varsigma}}[\mathcal{M}(|D^{2m}u_2|^{q_0\mu})(X)]^{\frac{1}{q_0\mu}}.
\end{aligned}
\end{equation}

By taking the $L_{q,\omega}(\R^{d+1}_{+})$-norms on both sides of \eqref{eq:ineqbeforenorms} and applying Theorem 2.3 of \cite{DK16} with $p=q$, we get for $C=C(\theta,d,m,K,q,[\omega]_{q},b_{j\beta},t_0,x_0)$,
\begin{align*}
&\|\partial_t u_2\|_{L_{q,\omega}(\cX)}+\sum_{\substack{|\alpha|\leq 2m, \alpha_1<2m}}\lambda^{1-\frac{|\alpha|}{2m}}\|D^{\alpha}u_2\|_{L_{q,\omega}(\cX)}\\
&\leq C|I|^{-1}(\omega(I))^{1/q}\bigg(\|\partial_t u_2\|_{L_{1}(\cX)}+\sum_{\substack{|\alpha|\leq 2m, \alpha_1<2m}}\lambda^{1-\frac{|\alpha|}{2m}}\|D^{\alpha}u_2\|_{L_{1}(\cX)}\bigg)\\
&\quad +C\kappa_{t_0,x_0}^{-(1-\frac{1}{q_0})}\sum_{|\alpha|\leq 2m}\lambda^{1-\frac{|\alpha|}{2m}}\|D^{\alpha}u_2\|_{L_{q,\omega}(\bR^{d+1}_+)}
+C\kappa_{t_0,x_0}^{\frac{d+2m}{q_0}}\|h\|_{L_{q,\omega}(\bR^{d+1}_+)}\\
&\quad +C\kappa_{t_0,x_0}^{\frac{d+2m}{q_0}}\rho^{\frac{1}{q_0\varsigma}}
\|D^{2m}u_2\|_{L_{q,\omega}(\bR^{d+1}_+)},
\end{align*}
where $I:=(t_0-r_{t_0,x_0}^{2m},t_0+r_{t_0,x_0}^{2m})$ and we used \eqref{eq:weightinclusion} and the weighted Hardy-Littlewood maximal function theorem to get, for instance,
\begin{equation*}
\begin{aligned}
&\|[\mathcal{M}(D^{2m}u_2)^{q_0\mu}]^{\frac{1}{q_0\mu}}\|_{L_{q,\omega}(\cX)}
=\|\mathcal{M}(D^{2m}u_2)^{q_0\mu}\|^{\frac{1}{q_0\mu}}_{L_{q/(q_0\mu),\omega}(\R^{d+1}_{+})}\\
&\le C\|(D^{2m}u_2)^{q_0\mu}\|^{\frac{1}{q_0\mu}}_{L_{q/(q_0\mu),\omega}(\R^{d+1}_{+})}
=C\|D^{2m}u_2\|_{L_{q,\omega}(\R^{d+1}_{+})},
\end{aligned}
\end{equation*}
with $C=C(d,q,[\omega]_{q})$.
Since
\[
a_{\tilde{\alpha}\tilde{\alpha}}(t,x)D^{2m}_{x_1}u_2=h-\partial_{t}u_2-\sum_{|\alpha|= 2m,\alpha_{1}<2m}a_{\alpha}(t,x)D^{\alpha}u_2-\lambda u_2,
\]
where $\tilde{\alpha}=(m,0,\ldots,0)$, we get
\begin{align}\label{eq1.44}
&\|\partial_t u_2\|_{L_{q,\omega}(\cX)}+\sum_{|\alpha|\leq 2m}\lambda^{1-\frac{|\alpha|}{2m}}\|D^{\alpha}u_2\|_{L_{q,\omega}(\cX)}\nonumber\\
&\leq C|I|^{-1}(\omega(I))^{1/q}\bigg(\|\partial_t u_2\|_{L_{1}(\cX)}+\sum_{|\alpha|\leq 2m}\lambda^{1-\frac{|\alpha|}{2m}}\|D^{\alpha}u_2\|_{L_{1}(\cX)}\bigg)\nonumber\\
&\quad +C\kappa_{t_0,x_0}^{-(1-\frac{1}{q_0})}\sum_{|\alpha|\leq 2m}\lambda^{1-\frac{|\alpha|}{2m}}\|D^{\alpha}u_2\|_{L_{q,\omega}(\bR^{d+1}_+)}
+C\kappa_{t_0,x_0}^{\frac{d+2m}{q_0}}\|h\|_{L_{q,\omega}(\bR^{d+1}_+)}\nonumber\\
&\quad +C\kappa_{t_0,x_0}^{\frac{d+2m}{q_0}}\rho^{\frac{1}{q_0\varsigma}}
\|D^{2m}u_2\|_{L_{q,\omega}(\bR^{d+1}_+)}.
\end{align}

Because $u_2=u-u_1$, by applying the triangle inequality and H\"older's inequality, we estimate the first term on the right-hand side of \eqref{eq1.44} by
\begin{align}
                        \label{eq1.47}
&C|I|^{-1}(\omega(I))^{1/q}\bigg(\|\partial_t u_2\|_{L_{1}(\cX)}+\sum_{|\alpha|\leq 2m}\lambda^{1-\frac{|\alpha|}{2m}}\|D^{\alpha}u_2\|_{L_{1}(\cX)}\bigg)\nonumber\\
&\le C|I|^{-1}(\omega(I))^{1/q}\bigg(\|\partial_t u\|_{L_{1}(\cX)}+\sum_{|\alpha|\leq 2m}\lambda^{1-\frac{|\alpha|}{2m}}\|D^{\alpha}u\|_{L_{1}(\cX)}\nonumber\\
&\quad +\|\partial_t u_1\|_{L_{1}(\cX)}+\sum_{|\alpha|\leq 2m}\lambda^{1-\frac{|\alpha|}{2m}}\|D^{\alpha}u_1\|_{L_{1}(\cX)}\bigg)\nonumber\\
&\le C|I|^{-1}(\omega(I))^{1/q}|I_1|(\omega(I_1))^{-1/q}
\bigg(\|\partial_t u\|_{L_{q,\omega}(\cX)}\nonumber\\
&\quad +\sum_{|\alpha|\leq 2m}\lambda^{1-\frac{|\alpha|}{2m}}\|D^{\alpha}u\|_{L_{q,\omega}(\cX)}\bigg)
+C\|\partial_t u_1\|_{L_{q,w}(\cX)}\nonumber\\
&\quad +C\sum_{|\alpha|\leq 2m}\lambda^{1-\frac{|\alpha|}{2m}}\|D^{\alpha}u_1\|_{L_{q,w}(\cX)},
\end{align}
where
$$
I_1:=(t_0-(\kappa_{t_0,x_0}^{-2}r_{t_0,x_0})^{2m},
t_0+(\kappa_{t_0,x_0}^{-2}r_{t_0,x_0})^{2m}),
$$
and we used the fact that $u$ is supported on $\tilde Q_{\kappa_{t_0,x_0}^{-2}  r_{t_0,x_0}}(t_0,x_0)$.
Using \eqref{eq11.08},
\begin{equation}\label{eq2.49}
|I|^{-1}(\omega(I))^{\frac 1 q}|I_1|(\omega(I_1))^{-\frac 1 q}
\le C(|I_1|/|I|)^{\frac {\sigma_1} q}\le C\kappa_{t_0,x_0}^{-\frac{4m\sigma_1}{q}}.
\end{equation}
By the triangle inequality, we estimate the remaining terms on the right-hand side of \eqref{eq1.44} by
\begin{align}
                                \label{eq2.17}
&C\big(\kappa_{t_0,x_0}^{-(1-\frac{1}{q_0})}
+\kappa_{t_0,x_0}^{\frac{d+2m}{q_0}}\rho^{\frac{1}{q\varsigma}}\big)\sum_{|\alpha|\leq 2m}\lambda^{1-\frac{|\alpha|}{2m}}\|D^{\alpha}u_2\|_{L_{q,\omega}(\bR^{d+1}_+)}
+C\kappa_{t_0,x_0}^{\frac{d+2m}{q_0}}\|h\|_{L_{q,\omega}(\bR^{d+1}_+)}\nonumber\\
&\le C\big(\kappa_{t_0,x_0}^{-(1-\frac{1}{q_0})}
+\kappa_{t_0,x_0}^{\frac{d+2m}{q_0}}\rho^{\frac{1}{q\varsigma}}\big)
\bigg(\sum_{|\alpha|\leq 2m}\lambda^{1-\frac{|\alpha|}{2m}}\|D^{\alpha}u\|_{L_{q,\omega}(\bR^{d+1}_+)}\nonumber\\
&\quad +\sum_{|\alpha|\leq 2m}\lambda^{1-\frac{|\alpha|}{2m}}\|D^{\alpha}u_1\|_{L_{q,\omega}(\bR^{d+1}_+)}\bigg)
+C\kappa_{t_0,x_0}^{\frac{d+2m}{q_0}}\|h\|_{L_{q,\omega}(\bR^{d+1}_+)}.
\end{align}

Since $u=u_1+u_2$ and $h=f-(A-A(t_0,x_0))u_1$, from \eqref{eq1.44}, \eqref{eq1.47}, \eqref{eq2.49}, and \eqref{eq2.17} it follows that
\begin{align*}
&\|u_t\|_{L_{q,\omega}(\R^{d+1}_{+})}+\sum_{|\alpha|\leq 2m}\lambda^{1-\frac{|\alpha|}{2m}}\|D^{\alpha}u\|_{L_{q,\omega}(\R^{d+1}_{+})}\\
&\leq C\|\partial_t u_1\|_{L_{q,\omega}(\R^{d+1}_{+})}+C\kappa_{t_0,x_0}^{\frac{d+2m}{q_0}}
\sum_{|\alpha|\leq 2m }\lambda^{1-\frac{|\alpha|}{2m}}\|D^{\alpha}u_1\|_{L_{q,\omega}(\R^{d+1}_{+})}\\
&\quad+ C\kappa_{t_0,x_0}^{\frac{d+2m}{q_0}}\|f\|_{L_{q,\omega}(\R^{d+1}_{+})}
+C\big(\kappa_{t_0,x_0}^{-(1-\frac{1}{q_0})}
+\kappa_{t_0,x_0}^{\frac{d+2m}{q_0}}\rho^{\frac{1}{q_0\varsigma}}
+\kappa_{t_0,x_0}^{-\frac{4m\sigma_1}{q}}\big)\cdot\\
&\qquad\bigg(\|\partial_t u\|_{L_{q,\omega}(\R^{d+1}_{+})}+\sum_{|\alpha|\leq 2m,}\lambda^{1-\frac{|\alpha|}{2m}}
\|D^{\alpha}u\|_{L_{q,\omega}(\R^{d+1}_{+})}\bigg)
\end{align*}
which combined with \eqref{eq:prob4.55} yields
\begin{align*}
&\|u_t\|_{L_{q,\omega}(\R^{d+1}_{+})}+\sum_{|\alpha|\leq 2m}\lambda^{1-\frac{|\alpha|}{2m}}\|D^{\alpha}u\|_{L_{q,\omega}(\R^{d+1}_{+})}\\
&\leq C\kappa_{t_0,x_0}^{\frac{d+2m}{q_0}}
\sum_{j=1}^m\|g_j\|_{W^{k_j,2mk_j}_{q,\omega}(\partial \R^{d+1}_{+})}+ C\kappa_{t_0,x_0}^{\frac{d+2m}{q_0}}\|f\|_{L_{q,\omega}(\R^{d+1}_{+})}\\
&\quad+C_KC_{\varepsilon}\kappa_{t_0,x_0}^{\frac{d+2m}{q_0}}\|u\|_{L_{q,\omega}(\R^{d+1}_{+})}
+C\big(\kappa_{t_0,x_0}^{-(1-\frac{1}{q_0})}
+\kappa_{t_0,x_0}^{\frac{d+2m}{q_0}}\rho^{\frac{1}{q_0\varsigma}}
+\kappa_{t_0,x_0}^{-\frac{4m\sigma_1}{q}}+\varepsilon\kappa_{t_0,x_0}^{\frac{d+2m}{q_0}}\big)\cdot\\
&\qquad\bigg(\|\partial_t u\|_{L_{q,\omega}(\R^{d+1}_{+})}+\sum_{|\alpha|\leq 2m,}\lambda^{1-\frac{|\alpha|}{2m}}
\|D^{\alpha}u\|_{L_{q,\omega}(\R^{d+1}_{+})}\bigg)
\end{align*}
Now we take $\kappa_{t_0,x_0}$ sufficiently large, $\varepsilon$ sufficiently small, and then $\rho\le \rho_{t_0,x_0}$  sufficiently small such that
\[
C\big(\kappa_{t_0,x_0}^{-(1-\frac{1}{q_0})}
+\kappa_{t_0,x_0}^{\frac{d+2m}{q_0}}\rho^{\frac{1}{q_0\varsigma}}
+\kappa_{t_0,x_0}^{-\frac{4m\sigma_1}{q}}+\varepsilon\kappa_{t_0,x_0}^{\frac{d+2m}{q_0}}\big)\leq 1/2,
\]
and finally take $\lambda\geq \lambda_{t_0,x_0}:=\max\{\lambda_0,
2C_{K}C_{K,\varepsilon}\}$, we get \eqref{eq:VMOtimespaceLSgjtx} for $u$ with support in $\tilde Q_{\kappa_{t_0,x_0}^{-2}  r_{t_0,x_0}}(t_0,x_0)$ and
$$
C=C(\theta,d,m,K,q,b_{j\beta},t_0,x_0).
$$

Observe that $C$, $\rho_{t_0,x_0}$, and $\lambda_{t_0,x_0}$ all depend on $(t_0,x_0)$. However, since $\overline{\tilde{Q}_{R+1}}$ is compact, we can apply a partition of the unity argument and get a uniform constant $C$. This will be done in the next step.

{\em Step 3.} Observe first that since $\overline{\tilde Q_{R+1}}$ is compact and
$$
\overline{\tilde Q_{R+1}}\subset \bigcup_{(t_0,x_0)\in \overline{\tilde Q_{R+1}}} \tilde Q_{\kappa_{t_0,x_0}^{-2}  r_{t_0,x_0}/2}(t_0,x_0),
$$
there
exists a finite number $N\in\N$ of points $(t_{0,i},x_{0,i})\in \overline{\tilde Q_{R+1}}$, $i=1,\ldots,N$ such that
$$
\overline{\tilde Q_{R+1}}\subset \bigcup_{i=1}^{N}\tilde{Q}_{\kappa_{t_{0,i},x_{0,i}}^{-2}  r_{t_{0,i},x_{0,i}}/2}(t_{0,i},x_{0,i}).
$$
Take $\zeta_i\in C_0^{\infty}(\overline{\tilde{Q}_{\kappa_{t_{0,i},x_{0,i}}^{-2}  r_{t_{0,i},x_{0,i}}}(t_{0,i},x_{0,i})})$, $i=1,\ldots,N$, such that
$\zeta_i=1$ on $\tilde Q_{\kappa_{t_{0,i},x_{0,i}}^{-2}  r_{t_{0,i},x_{0,i}}/2}(t_{0,i},x_{0,i})$,
and $\zeta_{0}\in C_0^\infty(\overline{\R^{d+1}_{+}})$ such that
\begin{equation*}
\zeta_0(x)=
\begin{cases}
1 & x\in\overline{\R^{d+1}_{+}}\backslash\overline{\tilde{Q}_{R+1}}\\
0 & x\in\overline{\tilde{Q}_{R}}.
\end{cases}
\end{equation*}
Let $\overline{\zeta}=\sum_{i=0}^{N}\zeta_i^q\geq 1$ in $\overline{\R^{d+1}_{+}}$.
Define $\eta_i=\zeta_i(\overline{\zeta})^{-1/q}$. Then, $\sum_{i=0}^{N}\eta_i^q=1$ in $\R^{d+1}_{+}$.

Now we define
\[
u_{i}(t,x)=u(t,x)\eta_{i}(t,x).
\]
Observe that
\begin{equation}\label{prob:proofVMOtimespaceLStx}
\begin{cases}
\partial_t u_i+(A+\lambda)u_i=f_i\ \ {\rm in}\ \R^{d+1}_{+}
\\B_ju_{i}\big|_{x_1=0}=g_{j,i}\ \ {\rm on}\ \partial\R^{d+1}_{+},\  j=1,\ldots,m
\end{cases}
\end{equation}
where by Leibnitz's rule
\[
f_{i}=f\eta_{i}+u(\eta_{i})_{t}+\sum_{|\alpha|=2m}\sum_{|\gamma|\leq 2m-1}\binom{\alpha}{\gamma}a_{\alpha}(t,x)D^{\gamma}uD^{\alpha-\gamma}\eta_{i}\]
and
\[ g_{j,i}=g_j\eta_{i}+\sum_{1\leq |\beta|\leq m_j}\sum_{|\tau|\leq |\beta| -1}\binom{\beta}{\tau}b_{j\beta}(t,x)D^{\tau}uD^{\beta-\tau}\eta_{i}\big|_{x_1=0}.
\]
Now applying the result in \emph{Step 2} to \eqref{prob:proofVMOtimespaceLStx} we get for $i=1,\ldots,N$,
\begin{align*}
&\|(u_{i})_t\|_{L_{q,\omega}(\R^{d+1}_{+})}+\sum_{|\alpha|\leq 2m}\lambda^{1-\frac{|\alpha|}{2m}}\|D^{\alpha}u_{i}\|_{L_{q,\omega}(\R^{d+1}_{+})}\\
&\leq C_i\|f_{i}\|_{L_{q,\omega}(\R^{d+1}_{+})} +C_i\sum_{j=1}^m\|g_{j,i}\|_{W^{k_j,2mk_j}_{q,\omega}(\partial \R^{d+1}_{+})},
\end{align*}
with $C_i=C(\theta,d,m,K,q,[\omega]_{q},b_{j\beta},t_{0,i},x_{0,i})$, provided that $\lambda\ge \lambda_{t_{0,i},x_{0,i}}$ and $\rho\le \rho_{t_{0,i},x_{0,i}}$.
Applying the result in \emph{Step 1} to \eqref{prob:proofVMOtimespaceLStx} with $i=0$ we get a similar inequality, with $C_0=C(\theta,d,m,K,q,[\omega]_{q},\bar b_{j\beta})$.
Observe that by the triangle inequality,
\begin{align*}
&\|\eta_{i}u_t\|_{L_{q,\omega}(\R^{d+1}_{+})}+\sum_{|\alpha|\leq 2m}\lambda^{1-\frac{|\alpha|}{2m}}\|\eta_{i}D^{\alpha}u\|_{L_{q,\omega}(\R^{d+1}_{+})}\\
&\leq \|(u_{i})_t\|_{L_{q,\omega}(\R^{d+1}_{+})}+\|u(\eta_{i})_t\|_{L_{q,\omega}(\R^{d+1}_{+})}
+\sum_{|\alpha|\leq 2m}\lambda^{1-\frac{|\alpha|}{2m}}\|D^{\alpha}u_{i}\|_{L_{q,\omega}(\R^{d+1}_{+})}\\
&\quad +\sum_{|\alpha|= 2m}\sum_{|\gamma|\leq 2m-1}\binom{\alpha}{\gamma}\lambda^{1-\frac{|\alpha|}{2m}}\|D^{\gamma}u D^{\alpha-\gamma}\eta_{i}\|_{L_{q,\omega}(\R^{d+1}_{+})},
\end{align*}

\begin{multline*}
\|f_{i}\|_{L_{q,\omega}(\R^{d+1}_{+})}\leq\|f\eta_{i}\|_{L_{q,\omega}(\R^{d+1}_{+})}+ \|u(\eta_{i})_t\|_{L_{q,\omega}(\R^{d+1}_{+})}\\
+C_{K}\sum_{|\alpha|= 2m}\sum_{|\gamma|\leq 2m-1}\binom{\alpha}{\gamma}\|D^{\gamma}u D^{\alpha-\gamma}\eta_{i}\|_{L_{q,\omega}(\R^{d+1}_{+})},
\end{multline*}
and
\begin{multline*}
\|g_{j,i}\|_{W^{k_j,2mk_j}_{q,\omega}(\partial \R^{d+1}_{+})}
\leq \|g_j\eta_{i}\|_{W^{k_j,2mk_j}_{q,\omega}(\partial \R^{d+1}_{+})}\\+C_{K}\sum_{1\leq |\beta|\leq m_j}\sum_{|\tau|\leq |\beta| -1}\binom{\beta}{\tau}\|D^{\tau}uD^{\beta-\tau}\eta_{i}\|_{W^{\frac{2m-m_j}{2m},2m-m_j}_{q,\omega}(\R^{d+1}_{+})},
\end{multline*}
where we used the boundedness of the coefficients $a_\alpha$ and $b_{j\beta}$.
After taking the $q$-th power, and summing in $i=0,1,\ldots,N$, letting
$$
C=C_0+\sup_{i=1,\ldots,N}C(\theta,d,m,K,q,[\omega]_q,b_{j\beta},t_{0,i},x_{0,i})
$$
and taking the $q$-th root, we get
\begin{align*}
&\|u_t\|_{L_{q,\omega}(\R^{d+1}_{+})}+\sum_{|\alpha|\leq 2m}\lambda^{1-\frac{|\alpha|}{2m}}\|D^{\alpha}u\|_{L_{q,\omega}(\R^{d+1}_{+})}\\
&\leq C\|f\|_{L_{q,\omega}(\R^{d+1}_{+})}+ C\sum_{j=1}^m\|g_j\|_{W^{k_j,2mk_j}_{q,\omega}(\partial\R^{d+1}_{+})}
+C\|u\|_{L_{q,\omega}(\R^{d+1}_{+})}\\
&\quad+C_{K}\sum_{|\alpha|= 2m}\sum_{|\gamma|\leq 2m-1}\binom{\alpha}{\gamma}\lambda^{1-\frac{|\alpha|}{2m}}\|D^{\gamma}u D^{\alpha-\gamma}\eta_{i}\|_{L_{q,\omega}(\R^{d+1}_{+})}\\
&\quad +C_{K}\sum_{1\leq |\beta|\leq m_j}\sum_{|\tau|\leq |\beta| -1}\binom{\beta}{\tau}\lambda^{1-\frac{|\tau|}{m_j}}
\|D^{\tau}u\|_{W^{\frac{2m-m_j}{2m},2m-m_j}_{q,\omega}(\R^{d+1}_{+})}
\end{align*}
with $C$ uniform in $t_{0,i},x_{0,i}$, provided that
$$
\lambda\ge \lambda':= \max\{\bar\lambda,\lambda_{t_{0,i},x_{0,i}}:i=1,\ldots,N\},
\quad \rho\le \rho':=\min\{\bar\rho,\rho_{t_{0,i},x_{0,i}}:i=1,\ldots,N\}.
$$

This, combined with interpolation estimates and taking $\varepsilon$ small and $\lambda$ large, gives \eqref{eq:VMOtimespaceLSgjtx} with $p=q$ and $\omega\in A_q(\bR)$ such that $[\omega]_{q}\leq \Lambda_0$, i.e.,
\begin{multline}\label{eq:VMOtimespaceLSgjtxp}
\|u_t\|_{L_{q,\omega}(\R^{d+1}_+)}+\sum_{|\alpha|\le 2m}\lambda^{1-\frac{|\alpha|}{2m}}\|D^{\alpha}u\|_{L_{q,\omega}(\R^{d+1}_+)}\\
\le C\|f\|_{L_{q,\omega}(\R^{d+1}_+)} + C\sum_{j=1}^m\|g_j\|_{W^{k_j,2mk_j}_{q,\omega}(\partial\R^{d+1}_{+})},
\end{multline}
where $k_j=1-m_j/(2m)-1/(2mq)$ and $C=C(\theta,m,d,K,q,\Lambda_0,b_{j\beta})>0$.

\emph{Step 4.} We now extrapolate the estimate from the previous step to $p\neq q$. By \eqref{eq:VMOtimespaceLSgjtxp} and Definition \ref{def:TLspace}, we have that for all $\omega\in A_q(\bR)$ such that $[\omega]_{A_{q}}\leq \Lambda_0$ there exist constants $\lambda', \rho', C>0$ depending on $\Lambda_0$ such that for any $\lambda\ge \lambda'$ and $\rho\le \rho'$,
\begin{multline}
\sum_{|\alpha|\le 2m}\lambda^{1-\frac{|\alpha|}{2m}}\|U_{\alpha}\|_{L_q(\R,\omega)}\\
 \leq C  \|F\|_{ L_{q}(\R,\omega)} + C\sum_{j=1}^m\|G_{j,1}\|_{L_{q}(\R,\omega)}
 +C\sum_{j=1}^m\|G_{j,2}\|_{L_{q}(\R,\omega)},
 \nonumber
\end{multline}
where
$$
U_{\alpha} = \|D^{\alpha}u\|_{L_{q}(\R^{d}_{+})},\quad
F = \|f\|_{L_{q}(\R^{d}_{+})},
$$
$$
G_{j,1}=\|2^{k k_j}\mathcal{F}^{-1}(\widehat{\varphi}_{k}\hat{g}_j)_{k\geq 0}\|_{\ell_{q}(L_{q}(\R^{d-1}))},\quad
G_{j,2}=\|g_{j}\|_{\B_{q,q}^{2mk_j}(\R^{d-1})}.
$$
Since the above estimate holds for all of the $A_{q}$ weights with uniformly bounded $A_q$-constant, $\rho'$ and $\lambda'$ can be chosen uniformly. Therefore, by the extrapolation result Theorem \ref{thm:extensionRubio} it follows that for all $\omega\in A_p$,  there exist a constant $C'$ depending on $[\omega]_{p}$ such that for all $\lambda\geq \lambda'$ and $\rho\le \rho'$,
\begin{multline}
\sum_{|\alpha|\le 2m}\lambda^{1-\frac{|\alpha|}{2m}}\|U_{\alpha}\|_{L_p(\R,\omega)}\\
\leq C'  \|F\|_{ L_{p}(\R,\omega)} + C'\sum_{j=1}^m\|G_{j,1}\|_{ L_{p}(\R,\omega)}+C'\sum_{j=1}^m\|G_{j,2}\|_{L_{p}(\R,\omega)}.
\nonumber
\end{multline}
This yields
\begin{multline}
\sum_{|\alpha|\le 2m}\lambda^{1-\frac{|\alpha|}{2m}}\|D^{\alpha}u\|_{L_p(\R,\omega;L_{q}(\R^{d}_{+}))}
\leq C \|f\|_{L_{p}(\R, \omega;L_{q}(\R^{d}_{+}))}\\
+ C\sum_{j=1}^m\|g_j\|_{F^{k_j}_{p,q}(\R,\omega;L_{q}(\R^{d-1}))\cap L_{p}(\R,\omega;\B_{q,q}^{2mk_j}(\R^{d-1}))},
\nonumber
\end{multline}
with $C=C(\theta,m,d,K,p,q,[\omega]_{p},b_{j\beta})$. As $u_t = f-(\lambda+A)u$, the estimate \eqref{eq:VMOtimespaceLSgjtx} directly follows.
\end{proof}

\section{Estimates on domains}\label{sec:domain}

In this section let $\Omega\subset\R^{d}$ be a bounded smooth domain of the class $C^{2m-1,1}$. In the following, we generalize Theorem \ref{thm:VMOproblemLStx} to this setting, i.e., we will prove $L_p(L_q)$--estimates for the parabolic problem
\begin{equation*}
\begin{cases}
u_t + (A(t,x)+\lambda)u=f & {\rm in}\ \R\times\Omega\\
{\rm tr}_{\partial\Omega}B_{j}(t,x)u=g_j & {\rm on}\ \R\times\partial\Omega.
\end{cases}
\end{equation*}

We impose slightly different assumptions in order to adapt to the bounded domain case.
Define the operators $A$ and $B_j$ as in Section \ref{sec:assumptions}, where $a_\alpha:\R\times\Omega\rightarrow\C$ and $b_{j\beta}:\R\times\Omega\rightarrow\C$.
Let $R_0\in (0,1]$, $K>0$, and $\theta\in (0,\pi/2)$ be fixed constants. The parameter--ellipticity condition is formulated as follows.
\let\ALTERWERTA\theenumi
\let\ALTERWERTB\labelenumi
\def\theenumi{(E)$_\theta$}
\def\labelenumi{\textbf{(E)}$_\theta$}
\begin{enumerate}
	\item\label{as:condelldomain}
	For all $t\in\R$ and $x\in\Omega$, it holds that
	\begin{equation*}
	\sigma(A_{\sharp}(t,x,\xi))\subset\Sigma_{\theta},\quad  \forall\ \xi\in\R^{n},\ |\xi|=1,
	\end{equation*}
	for the spectrum of the operator $A_{\sharp}(t,x,\xi)$.
\end{enumerate}
\let\theenumi\ALTERWERTA
\let\labelenumi\ALTERWERTB

Before stating the Lopatinskii--Shapiro condition, we need to introduce some notation.
For each $x_0\in \partial\Omega$, there is a local coordinate system such that $x_0$ is the origin and $e_1$ is the normal direction at $x_0$ \footnote{Here $[e_j]_{j=1}^{d}$ denotes the standard basis of $\R^d$.}. For $t_0\in \R$ and $x\in B_{2R_0}(x_0)\cap\Omega$, consider the operators $A^H(t_0,x,D), B_{j}^{H}(t_0,x,D)$ as in \eqref{eq:opH} and write the boundary problem $(A^H(t_0,x,D), B_{j}^{H}(t_0,x,D))$ in the local coordinates corresponding to $x_0$. We then assume that the (LS)$_\theta$--condition holds for any $x\in B_{2R_0}(x_0)\cap\Omega$ with respect to this coordinate system, which can be stated as follows.

\let\ALTERWERTA\theenumi
\let\ALTERWERTB\labelenumi
\def\theenumi{(LS)$_\theta$}
\def\labelenumi{\textbf{(LS)$_\theta$}}
\begin{enumerate}
	\item\label{as:LSconddomain}
	For each $(h_{1},\ldots,h_{m})^{T}\in\R^{d-1}$, $\xi\in\R^{m}$, $\displaystyle \lambda\in\overline{\Sigma}_{\pi-\theta}$, and $(t_0,x)\in \overline{\R\times\Omega}$ such that $x\in B_{2R_0}(x_0)\cap\Omega$ for some $x_0\in\partial\Omega$ and $|\xi|+|\lambda|\neq 0$, the ODE problem in $\R_+$
	\begin{equation*}
	\begin{cases}
	\lambda v+ A^H(t_0,x,\xi,D_{x_1})v=0,\quad  x_1 >0,\\
	{\rm tr}_{\partial\Omega}B^H_{j}(t_0,x,\xi,D_{x_1})v=h_{j},\quad  j=1,\ldots,m,
	\end{cases}
	\end{equation*}
	admits a unique solution $v\in C^{\infty}(\R_+)$ such that $\lim_{x\rightarrow \infty}v(x)=0$.
\end{enumerate}
\let\theenumi\ALTERWERTA
\let\labelenumi\ALTERWERTB
We also modify Assumption \ref{ass:VMO}.
\begin{assumption}[$\rho$] \label{ass:VMOdomain}
	For $|\alpha|=2m$, there exist a constant $R_0\in (0,1]$ such that
	$$
	\sup_{(t,x)\in\R^{d+1}}\sup_{r\leq R_0}\avint_{Q^{\Omega}_{r}(t,x)}|a_{\alpha}(s,y)-(a_{\alpha})_{Q^{\Omega}_{r}(t,x)}|\leq \rho,
	$$
where $Q^{\Omega}_{r}(t,x):=((t-r^{2m},t)\times B_{r}(x))\cap(\R\times\Omega)$.
\end{assumption}
We finally impose the following assumptions on the coefficients of $A$ and $B_j$.
\let\ALTERWERTA\theenumi
\let\ALTERWERTB\labelenumi
\def\theenumi{(A)}
\def\labelenumi{\textbf{(A)}}
\begin{enumerate}
	\item\label{as:operatorALSdomain} For the multi-index $\alpha$, the coefficients $a_{\alpha}$ are functions
	$\R\times\Omega\rightarrow\C$, $\|a_\alpha\|_{L_\infty}\le K$,
	and satisfy Assumption \ref{ass:VMOdomain} ($\rho$) with a parameter $\rho\in (0,1)$ to be determined later.
	Moreover, $A$ satisfies condition $(E)_{\theta}$.
\end{enumerate}
\let\theenumi\ALTERWERTA
\let\labelenumi\ALTERWERTB

\let\ALTERWERTA\theenumi
\let\ALTERWERTB\labelenumi
\def\theenumi{(B)}
\def\labelenumi{\textbf{(B)}}
\begin{enumerate}
	\item\label{as:operatorBLSdomain} The coefficients $b_{j\beta}:\R\times\Omega\rightarrow\C $ satisfy
	$$
	b_{j\beta}\in C^{\frac{2m-m_{j}}{2m},2m-m_{j}}(\R\times\Omega),\quad
	\|b_{j\beta}\|_{C^{\frac{2m-m_{j}}{2m},2m-m_{j}}(\R\times\Omega)}\leq K,
	$$
	and
	$$
	\lim_{|t|+|x|\rightarrow \infty}b_{j\beta}(t,x)=\overline{b}_{j\beta}.
	$$
	The \ref{as:LSconddomain}-condition is satisfied by $(A,\overline{B}_{j})$ for any $A\in (E)_{\theta}$, where $\overline{B}_{j}$, $j=1,\ldots,m$, are the boundary operators with coefficients $\overline{b}_{j\beta}$.
\end{enumerate}
\begin{theorem}\label{thm:VMOdomainLStx}
Let $T\in(-\infty,\infty]$, $p,q\in(1,\infty)$, and $\omega\in A_p(\R)$. Let $\Omega$ be a $C^{2m-1,1}$-domain with the $C^{2m-1,1}$-norm bounded by $K$. There exists
$$
\rho=\rho(\theta,m,d,K,p,q,b_{j\beta},[\omega]_{p})\in (0,1)
$$
such that under the assumptions \ref{as:operatorALS}, \ref{as:operatorBLS}, and \ref{as:LSconddomain} the following holds.
There exists $\lambda_0=\lambda_0(\theta,m,d,K,p,q,R_0,b_{j\beta})\geq 1$ such that for every $\lambda\geq\lambda_0$, for
$$
u\in W^{1}_{p}((-\infty,T),\omega;L_{q}(\Omega))\cap L_{p}((-\infty,T),\omega;W^{2m}_{q}(\Omega))
$$
satisfying the problem
\begin{equation}\label{prob:thmVMOdomLStx}
\begin{cases}
u_t + (A(t,x)+\lambda)u=f & {\rm in}\ \R\times\Omega\\
{\rm tr}_{\partial\Omega}B_{j}(t,x)u=g_j & {\rm on}\ \R\times\partial\Omega,
\end{cases}
\end{equation}
where $f\in L_p((-\infty,T),\omega;L_{q}(\Omega))$ and with $k_j=1-m_j/(2m)-1/(2mq)$, $g_j\in W^{k_j,2mk_j}_{p,q}((-\infty,T),\omega\times \partial\Omega)$, it holds
\begin{multline*}
\|u_t\|_{L_{p}((-\infty,T),\omega;L_{q}(\Omega))}+\sum_{|\alpha|\leq 2m}\lambda^{1-\frac{|\alpha|}{2m}}\|D^{\alpha}u\|_{L_{p}((-\infty,T),\omega;L_{q}(\Omega))}\\
\leq C\|f\|_{L_{p}((-\infty,T),\omega;L_{q}(\Omega))} + C\sum_{j=1}^m\|g_j\|_{W^{k_j,2mk_j}_{p,q,\omega}
((-\infty,T)\times \partial\Omega)},
\end{multline*}
with a constant $C=C(\theta,m,d,K,p,q,[\omega]_{p},b_{j\beta})>0$.
\end{theorem}

The proof of Theorem \ref{thm:VMOdomainLStx} follows from the technique of flattening the boundary. For this, we take an \textit{admissible} $C^{2m-1,1}$-coordinate transformation as in \cite[Section 8]{DHP}, so that the parameter-ellipticity condition and the \ref{as:LSconddomain}-condition are preserved. For further details on admissible coordinate transformations, we refer the reader to \cite[Section 2]{WlokaBook}.

Let $x_0$ be in a neighborhood of $\partial\Omega$ of width $2 R_0$  and choose coordinates corresponding to $x_0$. By definition of a $C^{2m-1,1}$-boundary (see \cite[Definition 2.4 and Theorem 2.6]{WlokaBook}), there exists an open neighborhood $B=B_1\times B_2\subset\R^d$ containing $x_0$ with $B_1\subset\R^{d-1}$ and $B_2\subset\R$ open and a function $\varphi\in C^{2m-1,1}(\overline{B_1})$ satisfying
$$
\partial\Omega\cap B=\{x=(x_1,x')\in B:\ x_1=\varphi(x')\}
$$
and
$$
\Omega\cap B=\{x\in B:\ x_1 >\varphi(x')\}.
$$
Setting
\[
\Phi(x):=\binom{x'}{x_1-\varphi(x')},\ \ x\in B,
\]
$\Phi:B\rightarrow \Phi(B), x\mapsto y$, and proceeding as in \cite[Section 8]{DHP}, the differential operators $A$ and $B_j$, $j=1,\ldots,m$, are transformed into the operators
\[
A^{\Phi}=\sum_{|\alpha|\leq 2m}a_{\alpha}^{\Phi}(t,y)D^{\alpha},\ \
B_j^{\Phi}=\sum_{|\beta|\leq m_j} b_{j\beta}^{\Phi}(t,y)D^{\beta}
\]
and act on functions defined on $\Phi(B)\cap\R^{d}_{+}$.
By \cite[Theorem 10.3]{WlokaBook}, the principal symbol of $A^{\Phi}$ is given by
\[
A^{\Phi}_{\sharp}(y,\xi)=A_{\sharp}(\Phi^{-1}(y),[D\Phi(\Phi^{-1}(y))]^{T}\xi),\ \ y\in\Phi(B)\cap\overline{\R^{d}_{+}},\ \ \xi\in\R^{d}.
\]
As $D\Phi$ is an isomorphism of $\R^d$ for all $x\in B$, this implies that parameter-ellipticity of $A^{\Phi}$ and, in particular, the condition $(E)_{\theta}$ are preserved under coordinate transformations.
Moreover, if $\Phi$ is admissible at $x_0$ then the transformed boundary problem $(A^{\Phi},B_j^{\Phi})$ satisfies the \ref{as:LSconddomain}-condition on $\R^{d}_{+}$ at the point $\Phi(x_0)$. This can be seen in the same way as \cite[Theorem 11.3]{WlokaBook}. Finally, it is easily seen that the leading coefficients of the new operator in the $y$-coordinates also satisfy Assumption \ref{ass:VMOdomain} with a possibly different $\rho$.
We remark that $u$ satisfies
\begin{equation*}
\begin{cases}
u_t + (A(t,x)+\lambda)u=f & {\rm in}\ \R\times(B\cap \Omega)\\
{\rm tr}_{\partial\Omega}B_{j}(t,x)u=g_j & {\rm on}\ \R\times (B\cap \partial\Omega)
\end{cases}
\end{equation*}
if and only if the transformed function $u^{\Phi}$ satisfies for $j=1,\dots,m$,
\begin{equation}\label{prob:VMOLStxtransformed}
\begin{cases}
\partial_tu^{\Phi}+ (\lambda+A^{\Phi})u^{\Phi}=f^{\Phi} & {\rm in}\ \R\times\Phi(B)\cap\R^{d}_{+}\\
{\rm tr}_{\R^{d}_{+}}B_j^{\Phi}(t,x)u^{\Phi}=g_{j}^{\Phi} & {\rm on}\ \R\times
\Phi(B)\cap\R^{d-1}.
\end{cases}
\end{equation}

\begin{proof}[Proof of Theorem \ref{thm:VMOdomainLStx}]
As in the proof of Theorem \ref{thm:VMOproblemLStx} it suffices to consider $T=\infty$.

Given a $C^{2m-1,1}$-boundary $\partial\Omega$, by \cite[Theorem 2.11]{WlokaBook} each point $x_0$ in a neighborhood of $\partial\Omega$ of width $2R_0$, possesses an open neighborhood $B(x_0)$ and an admissible $C^{2m-1,1}$-coordinate transformation $\Phi=\Phi_{x_0}:B(x_0)\rightarrow\R^{d}$ with the above properties. Denote now by $\Psi_{x_0}$ the push-forward operator corresponding to $\Phi_{x_0}$ and define the transformed differential operators $A^{\Phi_{x_0}}$ and $B_j^{\Phi_{x_0}}$, $j=1,\ldots,m$, acting on functions defined on $\Phi_{x_0}(B(x_0))\cap\R^{d}_{+}$.
The proof of Theorem \ref{thm:VMOdomainLStx} when $p=q$ then follows from Theorem \ref{thm:VMOproblemLStx} and a partition of the unity argument as in for instance \cite[Theorem 6]{DK11}, so we omit the details. The general case is then derived from the case when $p=q$ and Theorem \ref{thm:extensionRubio} as in the proof of Theorem \ref{thm:VMOproblemLStx}.
\end{proof}
In the same way as Theorem \ref{thm:VMOellipticLStx} is obtained from Theorem \ref{thm:VMOproblemLStx}, one can state and show the following elliptic version of Theorem \ref{thm:VMOdomainLStx}. Note that because $\Omega$ is assumed to be bounded, the limit behavior of $b_{j\beta}$ in the assumption (B) is unnecessary.
\begin{theorem}\label{thm:VMOellipticdomainLStx}
Let $q\in(1,\infty)$. Let $\Omega$ be a $C^{2m-1,1}$-domain with the $C^{2m-1,1}$-norm bounded by $K$. There exists
$
\rho=\rho(\theta,m,d,K,q,b_{j\beta})\in (0,1)
$ such that under the assumptions \ref{as:operatorALS}, \ref{as:operatorBLS}, and \ref{as:LSconddomain}, the following holds.
There exists $\lambda_0=\lambda_0(\theta,m,d,K,q,R_0,b_{j\beta})\geq 1$ such that for every $\lambda\geq\lambda_0$, for $u\in W^{2m}_{q}(\Omega)$ satisfying the problem
\begin{equation*}
\begin{cases}
(A(x)+\lambda)u=f & {\rm in}\ \Omega\\
{\rm tr}_{\partial\Omega}B_{j}(x)u=g_j & {\rm on}\ \partial\Omega,
\end{cases}
\end{equation*}
where $f\in L_{q}(\Omega)$, and with $k_j=(2m-m_j-1/q)/(2m)$, $g_j\in W_{q}^{2mk_j}(\partial\Omega)$, it holds
\begin{equation*}
\sum_{|\alpha|\leq 2m}\lambda^{1-\frac{|\alpha|}{2m}}\|D^{\alpha}u\|_{L_{q}(\Omega)}\leq C\|f\|_{L_{q}(\Omega)} + C\sum_{j=1}^m\|g_j\|_{W_{q}^{2mk_j}(\partial\Omega)},
\end{equation*}
with a constant $C=C(\theta,m,d,K,q,b_{j\beta})>0$.
\end{theorem}
\begin{remark}
\begin{enumerate}
\item[(i)] The a priori estimates in Theorems \ref{thm:VMOproblemLStx}, \ref{thm:VMOdomainLStx} and the corresponding elliptic results can be used to derive the existence of solutions to the corresponding equations. This can be shown in the same way as in \cite [Section 6]{DG17}.
\item[(ii)] For notational simplicity, we consider the scalar case only. However, with the same proofs Theorems \ref{thm:VMOproblemLStx} and \ref{thm:VMOdomainLStx} and the corresponding elliptic results also hold if one considers systems of operators.
\end{enumerate}
\end{remark}

\def\cprime{$'$}

\end{document}